\documentclass[11pt]{amsart} 

\usepackage[all,cmtip]{xy}
\usepackage{amsmath, amssymb}
\usepackage{mathtools}
\usepackage[T1]{fontenc}
\usepackage[english]{babel}
\usepackage[hidelinks]{hyperref}
\usepackage[marginpar=2.5cm]{geometry}
\usepackage{xcolor}\usepackage{marginnote}
\usepackage{amsrefs}
\usepackage{orcidlink}

\newtheorem{thm}{Theorem}[section]
\newtheorem{prop}[thm]{Proposition}
\newtheorem{lem}[thm]{Lemma}
\newtheorem{cor}[thm]{Corollary}
\newtheorem{main}{Theorem}

\newtheorem{mconj}[main]{Conjecture}

\theoremstyle{definition}

\newtheorem{example}[thm]{Example}
\newtheorem{question}[thm]{Question}

\theoremstyle{remark}
\newtheorem{remark}[thm]{Remark}

\newcommand{\R}{\mathbb{R}}
\newcommand{\Q}{\mathbb{Q}}
\newcommand{\cc}{\mathcal}
\DeclareMathOperator{\rk}{rk}
\DeclareMathOperator{\cl}{cl}
\DeclareMathOperator{\Span}{span}
\newcommand{\del}{\partial}
\newcommand{\ip}[2]{\langle #1, #2\rangle}
\newcommand{\ab}[1]{\left| #1 \right|}

\title[The radial derivative]{The radial derivative on the graded M\"obius algebra}
\author[T.~Sinclair]{Thomas Sinclair \orcidlink{0000-0003-0401-7232}}

\address{Mathematics Department, Purdue University, 150 N. University Street, West Lafayette, IN 47907-2067}
\email{tsincla@purdue.edu}
\urladdr{http://www.math.purdue.edu/~tsincla/}

\date{\today}

\begin{document}

\begin{abstract}
Let $M$ be a simple matroid and let $B(M)$ be the graded M\"obius algebra of its lattice of flats.  The ordered-basis weights of flats define an inner product for which the adjoints of atom multiplication become ordinary coordinate derivatives under the basis-polynomial realization. From this, we construct a canonical global lowering operator $D_\beta$ which acts as ordinary differentiation on a canonical ``radial'' copy of a truncated polynomial algebra.  Allowing both $D_\beta$ and the coordinate derivatives to act produces a graded cyclic module with Hilbert series
\[
H_{\beta,M}(q)=\sum_{k=0}^r h_k^\beta(M)q^k.
\]
We give examples of matroids with the same Derksen $\mathcal G$-invariant and the same classical apolar Hilbert series but different $H_\beta$.  Hence $H_\beta$ cannot be the restriction to simple matroids of a valuative matroid invariant.

We conjecture that $H_\beta$ is log-concave and top-heavy in differential degree.  For the generalized theta family containing Larson's counterexample to Whitney log-concavity, we compute the first four coefficients and prove the critical log-concavity inequality.  Exact computation verifies both conjectures for all $950$ simple matroids on eight elements.
\end{abstract}

\maketitle

\section{Introduction}

For a rank-$r$ matroid $M$, we write $L(M)$ for its lattice of flats and $L_k(M)$ for the flats of rank $k$. The \emph{Whitney numbers of the second kind} are defined as $W_k(M) := |L_k(M)|$.  That the Whitney numbers of the second kind are top-heavy,
\[
W_k(M)\le W_{r-k}(M),\qquad k\le r/2,
\]
is a landmark achievement of singular Hodge theory developed by Braden, Huh, Matherne, Proudfoot, and Wang \cite{BHMPW}.  Two stronger shape expectations have recently failed: Larson constructed graphic counterexamples to Mason's log-concavity conjecture \cites{Mason,Larson}, and flat-count sequences need not even be unimodal by work of Divoux, Larson, Lowen, and Wang \cite{DivouxLarsonLowenWang}.  This makes it natural to ask whether the lattice of flats carries a different canonical numerical profile with a more rigid shape.

We propose a candidate for such a profile derived from a differential calculus on the graded M\"obius algebra $B(M)$. The graded M\"obius algebra has a rich and extensively developed theory: see, for instance, \cites{BHMPW,LaClairMastroeniMcCulloughPeeva,Lee,MaenoNumata}. The source of the differential calculus considered here lies in an operator-theoretic approach to $B(M)$ from recent work of the author \cites{SinclairHamiltonian,SinclairFFC}.  Let $L=L(M)$ and let $e_x\in \R[L]$ denote the basis vector indexed by a flat $x$.  If $a$ is an atom, the linear operator on $\R[L]$ induced by multiplication by $e_a$ is denoted $L_a$.  Write $\beta_x$ for the number of ordered atom bases of $x$.  The vectors
\[
v_k :=\Bigl(\sum_aL_a\Bigr)^ke_{\hat 0}=\sum_{x\in L_k}\beta_xe_x
\]
form a canonical realization of the truncated polynomial ring $\R[t]/(t^{r+1})$, which we will refer to as the \emph{radial core} of $B(M)$.  Equip $\R[L]$ with the inner product
\[
\ip{e_x}{e_y}_\beta=\frac{\delta_{x,y}}{\beta_x},
\]
and let $L_a^\beta$ be the adjoint of $L_a$ with respect to this inner product, that is, \[\ip{L_ae_x}{e_y}_\beta = \ip{e_x}{L_a^\beta e_y}_\beta\] for all $x,y\in L$. Note that $L_a^\beta$ is degree-one lowering, taking $\R[L_k]$ into $\R[L_{k-1}]$.  As we will see in the sequel, under a natural basis-polynomial realization these adjoints are ordinary coordinate derivatives. We will define a canonical degree-one lowering operator $D_\beta$ with the property that
\[
D_\beta v_k=kv_{k-1}.
\]
Thus, $D_\beta$ is ordinary differentiation on the radial core. Away from that core it detects the nonuniform extension geometry of the lattice. While the operators $\{L_a^\beta\}_{a\in A}$ commute pairwise, $D_\beta$ generally does not commute with them.
We mention in passing that the operator $D_\beta$ arose from an attempt to understand differential formulas for finite free convolution in the context of \cite{SinclairFFC}.

Starting from $v_r$, let $\cc C^\beta(M)=(\cc C_k^\beta(M))_{k=0}^r$ be the cyclic module generated by $D_\beta$ and the $L_a^\beta$, graded by word length. Equivalently,
\[
\cc C_0^\beta(M):=\R v_r,\qquad
\cc C_{k+1}^\beta(M):=D_\beta\cc C_k^\beta(M)+\sum_{a\in A}L_a^\beta\cc C_k^\beta(M).
\]

We set
\[
h_k^\beta(M) :=\dim \cc C_k^\beta(M),\qquad
H_{\beta,M}(q) :=\sum_{k=0}^rh_k^\beta(M)q^k.
\]
If $D_\beta$ is omitted, one recovers the Macaulay inverse system of the basis-generating polynomial $B_M$. We denote its Hilbert function by $h_k^0(M)$.  The first comparison is immediate but useful:
\[
\quad h_k^0(M)\le h_k^\beta(M)\le W_{r-k}(M).
\]
The first inequality exhibits $H_\beta$ as a canonical enlargement of a symmetric Gorenstein Hilbert function.  The second records that differential degree $k$ lies in the rank-$(r-k)$ flat layer.  The reverse comparison $W_k\le h_k^\beta$ does not hold; it fails for $938$ of the $950$ simple eight-element matroids.

\medskip

This note collects preliminary results and tested conjectures around this invariant.

\medskip

First, the invariant is not merely a repackaging of standard matroid data.

\begin{main}[Separation beyond the $\mathcal G$-invariant]\label{thm:G-separation}
There exist simple rank-five matroids $P,Q$ with
\[
\mathcal G(P)=\mathcal G(Q),\qquad H_P^0(q)=H_Q^0(q),
\]
but
\[
H_{\beta,P}(q)=1+9q+30q^2+25q^3+8q^4+q^5,
\]
\[
H_{\beta,Q}(q)=1+9q+28q^2+25q^3+8q^4+q^5.
\]
In both cases
\[
H_P^0(q)=H_Q^0(q)=1+8q+25q^2+25q^3+8q^4+q^5.
\]
\end{main}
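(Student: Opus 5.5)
This is an existence statement, so the plan is to exhibit $P$ and $Q$ explicitly and certify the three equalities by finite computation. The rank is five and $h_1^0=8$; since the coordinate derivatives of $B_M$ are the partials $\partial_a B_M$, which are linearly independent for a simple matroid in which every element lies in some basis, this indicates ground sets of size eight. So I will search among the $950$ simple matroids on eight elements (the same census used elsewhere in the paper) for rank-five pairs that share the Derksen $\mathcal G$-invariant and have different $H_\beta$. The natural source of $\mathcal G$-equivalent pairs is a pair of relaxation-type or \emph{free-swap} constructions: two matroids whose catalogues of cyclic-flat configurations (sizes and ranks) agree but are glued differently. Derksen's $\mathcal G$-invariant is determined by this catalogue, by Bonin--Kung. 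Concretely, I would fix a rank-five paving-type matroid on eight elements with two or three dependent hyperplanes, and vary how these hyperplanes meet: pairwise disjoint versus sharing elements, keeping the multiset of $(|X|,\rk X)$ over cyclic flats, together with the relevant intersection data, unchanged.

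With candidates $P,Q$ fixed, I will verify the claims in order. First, $\mathcal G(P)=\mathcal G(Q)$: either compute $\mathcal G$ directly as the sum over all $8!$ permutations of the rank-jump sequence, or check that the configurations agree. Second, $H^0_P=H^0_Q$: the basis sets have the same size, and I compute the ranks of the spaces $\{\partial^\alpha B_M : |\alpha|=k\}$ for $k\le 5$. Gorenstein symmetry gives $h^0_k=h^0_{5-k}$, so only $k=1,2$ need computing, and these yield $8$ and $25$. Third, $H_\beta$: I realize $L_a^\beta$ on $\R[L]$ by the formula forced by the inner product, namely $L_a^\beta e_y=\sum_{x\lessdot y,\, x\vee a=y}(\beta_x/\beta_y)\,e_x$. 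I build $D_\beta$ from its definition in the sequel. Then I iterate $\cc C_{k+1}^\beta = D_\beta\cc C_k^\beta+\sum_a L_a^\beta \cc C_k^\beta$ from $v_5$ and record ranks by exact rational Gaussian elimination.

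The bounds $h^0_k\le h^\beta_k\le W_{5-k}$ give sanity checks. The value $h_5^\beta=1$ is automatic because $\cc C_5^\beta\subseteq \R e_{\hat 0}$ and $D_\beta^5 v_5=5!\,v_0\neq 0$. Likewise $h_1^\beta=9$ means $D_\beta v_5$ lies outside the span of the $L_a^\beta v_5$. The discrepancy $30$ versus $28$ must then arise in degree $2$, where the noncommutation of $D_\beta$ with the $L_a^\beta$ sees how the dependent hyperplanes overlap, which is exactly the information $\mathcal G$ forgets.

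The main obstacle is locating a pair that is $\mathcal G$-equivalent yet separated by $H_\beta$. Many $\mathcal G$-equivalent pairs will likely give equal $H_\beta$, and the search must be filtered first by $\mathcal G$, which is cheap relative to the ranks. I would first try the standard small Bonin--Kung style examples lifted to rank five; failing that, I would fall back to exhaustive enumeration over the census. The resulting certificate would consist of the explicit lists of nonbases of $P$ and $Q$ together with the rank computations, which are routine once the matroids are written down.
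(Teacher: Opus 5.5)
Your verification pipeline matches the paper's: direct computation of $\mathcal G$ over all $8!$ orderings, Gorenstein symmetry for $H^0$, the adjoint formula for $L_a^\beta$, and exact rational row reduction of the recursively generated spaces $\mathcal C_k^\beta$. Your side remarks ($h_1^0=|E|$ for simple $M$, $h_5^\beta=1$, the meaning of $h_1^\beta=9$) are also correct. The gap is that you never produce $P$ and $Q$. For an existence statement with prescribed Hilbert series, the witness pair is the proof, and a search that has not been run certifies nothing.

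The idea you are missing is duality. The paper takes Bonin's rank-three $\mathcal G$-equivalent pair $M,N$ on eight elements. Their common cyclic flats are $\{0,1\},\{6,7\},\{0,1,2,3\},\{0,1,6,7\}$, plus $\{4,5,6,7\}$ for $M$ and $\{0,1,4,5\}$ for $N$. The paper sets $P=M^*$, $Q=N^*$.
\begin{itemize}
\item These have rank $8-3=5$.
\item They are simple because $M,N$ have no coloops and no six-element hyperplanes, i.e.\ no two-element cocircuits.
\item $\mathcal G(P)=\mathcal G(Q)$, because the rank-jump word of $M^*$ along an ordering is the reversed complement of the word of $M$ along the reversed ordering. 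So $\mathcal G(M^*)$ is a fixed linear image of $\mathcal G(M)$.
\end{itemize}

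Moreover, your first-line heuristics point away from this pair. Cyclic flats of $M^*$ are complements of cyclic flats of $M$. So $P$ has the four-element rank-three circuits $\{0,1,2,3\},\{2,3,4,5\},\{4,5,6,7\}$, and $Q$ has $\{2,3,4,5\},\{2,3,6,7\},\{4,5,6,7\}$; neither is paving. They also have different configurations. Bonin--Kung show that the configuration determines $\mathcal G$, but not conversely, and Bonin's pair was built precisely as a failure of the converse. Restricting to ``same configuration, glued differently'' therefore excludes the actual witnesses. Note too that the undualized pair does not separate, since $H_{\beta,M}=H_{\beta,N}=1+7q+6q^2+q^3$; the passage to duals, rather than some lift, is doing real work.

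Only your fallback would reach $P$ and $Q$: an exhaustive search through the $217$ rank-five simple matroids of the eight-element census, filtered by directly computed $\mathcal G$, then $H^0$, then $H_\beta$. That search is finite and would succeed. But until it is run and returns a pair with exactly the stated series, what you have is a search plan rather than a proof.
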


Since the $\mathcal G$-invariant introduced by Derksen \cite{Derksen} is universal among valuative matroid invariants by Derksen and Fink \cite{DerksenFink}, $H_\beta$ cannot be the restriction to simple matroids of a valuative matroid invariant.

\medskip

Second, extensive computer-assisted computations suggest two robust shape properties.  Put
\[
\delta_k(M) :=h_k^\beta(M)-h_k^0(M).
\]
Since $h_k^0=h_{r-k}^0$, the inequality $\delta_k\ge\delta_{r-k}$ is equivalent to top-heaviness of $H_\beta$ in differential degree.

\begin{mconj}[Log-concavity]\label{conj:lc}
For every simple matroid $M$,
\[
(h_k^\beta)^2\ge h_{k-1}^\beta h_{k+1}^\beta,
\qquad 1\le k\le r-1.
\]
\end{mconj}

\begin{mconj}[Front-loading]\label{conj:front}
For every simple rank-$r$ matroid $M$,
\[
\delta_k(M)\ge\delta_{r-k}(M),\qquad k\le r/2.
\]
Equivalently,
\[
h_k^\beta(M)\ge h_{r-k}^\beta(M),\qquad k\le r/2.
\]
\end{mconj}

\medskip

Third, the recent Whitney counterexamples give a particularly pointed test.  Let $M_t$ be the cycle matroid of the generalized theta graph with four internally disjoint paths of lengths $(1,t,t,t)$.  Larson's example is $M_{26}$.  We prove:

\begin{main}[Generalized theta family]\label{thm:theta}
For $t\ge6$,
\[
(h_0^\beta,h_1^\beta,h_2^\beta,h_3^\beta)
=
\left(
1,
3t+1,
\frac{9t^2+9t-6}{2},
\frac{9t^3+21t^2-28t}{2}
\right).
\]
Consequently,
\[
(h_2^\beta)^2-h_1^\beta h_3^\beta
=
\frac{27t^4+18t^3+99t^2-52t+36}{4}>0.
\]
For $t=26$ the first four coefficients are $(1,79,3156,85826)$ and the critical gap is $3{,}180{,}082$.
\end{main}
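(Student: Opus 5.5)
The plan is to compute the four spaces $\cc C_0^\beta,\dots,\cc C_3^\beta$ explicitly. We work in the basis-polynomial realization, where $\cc C_k^\beta(M_t)\subseteq\R[L_{r-k}]$ with $r=3t-2$ and the $L_a^\beta$ act as $\partial/\partial x_a$. Throughout we use the explicit formula for $D_\beta$ established earlier in the paper. We then obtain each $h_k^\beta$ as $h_k^0$ plus the rank of the new directions contributed by $D_\beta$ modulo the apolar part. The displayed log-concavity gap is then pure algebra:
\[
4\bigl((h_2^\beta)^2-h_1^\beta h_3^\beta\bigr)=(9t^2+9t-6)^2-2(3t+1)(9t^3+21t^2-28t),
\]
and one checks that this equals $27t^4+18t^3+99t^2-52t+36$. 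This polynomial is positive for all real $t$, since $27t^4+99t^2\ge 18|t|^3+52|t|$ whenever $|t|\ge1$ and the case $|t|<1$ is trivial. Substituting $t=26$ gives the stated numbers.

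\emph{Combinatorial input.} Write $M_t$ on $n=3t+1$ edges: the short edge $s$ and three paths $P_1,P_2,P_3$ of $t$ edges each. The flats of corank $k\le 3$ are the complements of unions of cocircuits, and the cocircuits are of two kinds. The first kind consists of pairs of edges lying in the same series class $P_i$. The second kind consists of the vertex-star-type cuts, each containing $s$ and one edge from each $P_i$. From this we list the corank-$1,2,3$ flats by the type of the removed edge set, that is, how many edges are removed from each $P_i$ and whether $s$ is removed. The corresponding graphic flats are vertex partitions into $k+1$ connected blocks. Using the $S_3$ symmetry permuting the $P_i$ and the reflection reversing all paths, this gives a finite list of orbit types whose sizes are polynomials in $t$. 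In the same coordinates we write the components of $v_{r-k}$, namely $\beta_x$, and the components of $B_M$ and its $k$-th partials, as products of path-length factors.

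\emph{Apolar part, then $D_\beta$.} Let $h_k^0$ be the dimension of the span of the partials $\partial^S B_M$ with $|S|=k$. By the earlier sandwich $h^0_k\le h^\beta_k\le W_{r-k}$, the polynomiality in $t$ of these sizes is automatic. The partials depend only on how many edges of each series class and whether $s$ lie in $S$, because derivatives along two edges of a single series class coincide. Hence $h_k^0$ is the rank of a small coefficient matrix indexed by these types. For $k=1$ this gives $h_1^0=n$. We then check that $D_\beta v_r=rv_{r-1}$ already lies in $\Span\{\partial_a B_M\}$, so $h_1^\beta=3t+1$.

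For $k=2,3$ the new contributions come from the vectors $D_\beta\partial^S B_M$, $\partial_a D_\beta\partial^{S'}B_M$ and $D_\beta^2\partial^{S''}B_M$ (and $D_\beta^3 v_r$ when $k=3$). The commutators $[D_\beta,\partial_a]$ are nonzero, and they are supported precisely on flats where the extension geometry is nonuniform; here these are the flats that cut a path in the middle versus at its ends. The main step is therefore to compute these commutators on the orbit types and to determine the rank of the resulting spanning set inside $\R[L_{r-k}]$.

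\emph{Main obstacle.} The difficult step is the exact rank in degree $3$, which gives $(9t^3+21t^2-28t)/2$. For the upper bound, we would exhibit explicit linear relations: in the reduced coordinates the type-indexed spanning vectors satisfy relations coming from the series-class identities and from the $S_3$-equivariance of $D_\beta$. For the lower bound, we would find a triangular family of the predicted size. Concretely, we order the corank-$3$ flats by how far the cuts are from the branch vertices, and pick one generator for each leading flat, so that the chosen vectors have distinct leading flats.

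The hypothesis $t\ge6$ should enter exactly here. We expect it to be needed so that three cut positions on a single path, together with the end-effects near both branch vertices, never interact; for smaller $t$ additional coincidences lower the rank. We would verify the small-$t$ boundary cases $t=6,7$ by direct computation, as a check on the uniform triangularity argument.
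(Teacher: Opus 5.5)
\textbf{There is a genuine gap.} The final algebra is correct: the gap polynomial, its positivity, and the $t=26$ numbers all check out. Your degree-one claim $D_\beta v_r\in\Span\{\partial_aB_M\}$ is also true, though you only assert it; the paper exhibits weights $-\tfrac12$ on $s$ and $\tfrac12$ on each long edge, which give total weight $1$ on every cocircuit. But the reduction that everything else rests on is false.

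Derivatives along two edges $a\neq b$ of the same path $P_1$ do not coincide. The set $I=E\setminus\{s,e_2,e_3\}$ with $e_j\in P_j$ is a basis containing both $a$ and $b$. So $\partial_aB_M$ has a positive monomial involving $z_b$, while $\partial_bB_M$ has no such monomial. Your own conclusion $h_1^0=3t+1$ contradicts the claim, which would force $h_1^0\le4$.

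What is true is weaker: $\partial^SB_M$ is determined by its occupancy type up to the matroid automorphism group $\mathfrak S_t^3\rtimes\mathfrak S_3$. That group, not the graph symmetries $\mathfrak S_3\times\mathbb Z/2$ you cite, is the relevant one. Each type is then an orbit spanning a permutation module whose dimension grows with $t$. A matrix indexed by types sees only orbit sums, i.e.\ the trivial isotypic component. That is exactly where nothing new happens: the paper shows the trivial sector is already saturated by $\cc C_k^0$ in degrees $2$ and $3$. All of the new dimension, $3(t-1)$ and $\tfrac{3t(7t-9)}{2}$, lives in the constituents $V_i=S^{(t-1,1)}$, $W_i=S^{(t-2,2)}$ and $V_i\otimes V_j$.

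For the same reason, your picture of $[D_\beta,\partial_a]$ as concentrated on flats that cut a path ``in the middle versus at its ends'' cannot be the mechanism. The operator $D_\beta$ is built from $\rk$, $m$ and $s$, so it commutes with $\mathfrak S_t^3$, which moves cut positions freely within a path. Your proposed triangular order by distance from the branch vertices therefore has no structural basis. Your remark that the sandwich $h_k^0\le h_k^\beta\le W_{r-k}$ makes polynomiality in $t$ ``automatic'' is also a non sequitur.

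The missing ingredients are as follows.
\begin{enumerate}
\item The classical input $h_k^0=\binom{3t+1}{k}$ for $k\le3$, i.e.\ full column rank of the incidence matrices $A_k$. The paper gets this from set-inclusion matrices and Gottlieb's rank theorem, handling the boundary case $t=6$ through the $S^{(3,3)}$ summand. This is where $t\ge6$ actually enters, not end effects at the branch vertices.
\item An isotypic decomposition of the source words and target layers via Young's rule, which bounds each quotient multiplicity. This step must show three things:
\begin{itemize}
\item the three-row constituent $S^{(t-2,1,1)}$ maps to zero, because the targets contain only two-row Specht modules;
\item the two $W_i$ source lines agree modulo the classical part;
\item words containing two or three $D_\beta$'s add nothing new.
\end{itemize}
\item Explicit compressed zonal witness matrices with nonzero determinants, showing that these upper bounds are attained.
\end{enumerate}

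Your ``main obstacle'' paragraph identifies degree three as the hard case but supplies neither the relations nor the independent family. As it stands, the proposal establishes only $h_0^\beta$, asserts $h_1^\beta$ without proof, and does the arithmetic after assuming the answer.
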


Thus the $H_\beta$ inequality holds exactly at the differential index corresponding to Larson's counterexample to Mason's log-concavity conjecture.  Finally, exact rational computation verifies both conjectures for all $950$ simple matroids on eight elements, and a separately validated three-prime backend finds no failure in a stratified sample of $400$ simple nine-element matroids.  

\medskip

These computations are evidence, not a substitute for a structural proof. At present we do not have a satisfying theoretical framework that explains the shape suggested by these computations.


\section{Differential Calculus and the Apolar Module}\label{sec:construction}

Let $L$ be a finite geometric lattice of rank $r$ with atom set $A$. We will denote the bottom and top elements of $L$ by $\hat 0$ and $\hat 1$, respectively. For instance, we could take $L = L(M)$ for $M$ a simple matroid. Its graded M\"obius algebra $B(L)$ is the vector space $\R[L]$ equipped with the product
\[
e_x\diamond e_y :=
\begin{cases}
e_{x\vee y},&\rk(x)+\rk(y)=\rk(x\vee y),\\
0,&\text{otherwise}.
\end{cases}
\]
If $L = L(M)$ we will alternately write $B(M)$ for $B(L(M))$.
For $a\in A$, let $L_a$ be multiplication by $e_a$ and $U=\sum_aL_a$.  The $L_a$ commute and satisfy $L_a^2=0$.
The graded M\"obius algebra is standard in current matroid Hodge theory: it appears in the proof of top-heaviness \cite{BHMPW}, its homological properties have recently been studied in \cite{LaClairMastroeniMcCulloughPeeva}, and Lee gives a tropical-cohomological realization for arbitrary matroids \cite{Lee}.  

\medskip

Our new ingredients are the weighted adjoint and differential structures placed on this algebra.

\medskip

We begin by constructing the radial core of $B(L)$. Let
\[
u:=\sum_{a\in A}e_a\in B(L)_1
\]
and, for $0\le k\le r$, let
\[
v_k:=u^k
=\Bigl(\sum_{a\in A}L_a\Bigr)^ke_{\hat 0} = \sum_{x\in L_k} \beta_x e_x.
\]
The identity is a standard expansion using that $\{L_a\}_{a\in A}$ is a commuting family and that $L_a^2=0$. Indeed, expanding $u^k$ gives
\[
u^k=\sum_{(a_1,\ldots,a_k)\in A^k}
e_{a_1}\cdots e_{a_k}.
\]
A product $e_{a_1}\diamond \dotsb \diamond e_{a_k}$ is nonzero precisely when
$a_1,\ldots,a_k$ are independent, in which case it equals
$e_x$ for $x={\cl\{a_1,\ldots,a_k\}}$.  Hence the coefficient of $e_x$, for
$x\in L_k$, is exactly the number $\beta_x$ of ordered atom bases
of $x$.

\begin{prop}[The radial core]\label{prop:radial-core}
The subspace
\[
\cc R(L):=\Span\{v_0,\ldots,v_r\}\subseteq B(L)
\]
is a graded subalgebra, and the map
\[
\R[t]/(t^{r+1})\longrightarrow \cc R(L),
\qquad
t^k\longmapsto v_k,
\]
is an isomorphism of graded algebras.
\end{prop}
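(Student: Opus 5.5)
The plan is to exploit the fact that $v_k=u^k$ for the single degree-one element $u=\sum_{a\in A}e_a$. The proof should then reduce to two points: the powers of $u$ multiply correctly, and the nonzero ones are linearly independent.

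\emph{Multiplicative structure.} Since $B(L)$ is associative and commutative, $u^j\diamond u^k=u^{j+k}$ for all $j,k\ge 0$. This gives
\[
v_j\diamond v_k=v_{j+k}\quad\text{for } j+k\le r.
\]
If $j+k>r$, then $u^{j+k}$ lies in $B(L)_{j+k}=0$, because $B(L)$ is graded with $B(L)_m=\R[L_m]$ and $L$ has no elements of rank greater than $r$. Associativity and commutativity follow from the corresponding properties of $\vee$ together with additivity of rank: a product $e_{x_1}\diamond\cdots\diamond e_{x_n}$ is nonzero precisely when the ranks add up to the rank of the join. Alternatively, we can simply use that the $L_a$ commute, as recorded above. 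In either case, $\mathcal R(L)$ is closed under $\diamond$, contains the unit $v_0=e_{\hat 0}$, and is spanned by homogeneous elements. So it is a graded subalgebra.

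\emph{The map.} The map $\R[t]\to B(L)$ defined by $t\mapsto u$ is an algebra homomorphism, and it sends $t^{r+1}$ to $u^{r+1}=0$. It therefore descends to a graded algebra homomorphism $\R[t]/(t^{r+1})\to\mathcal R(L)$ with $t^k\mapsto v_k$, and this homomorphism is surjective by the definition of $\mathcal R(L)$.

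\emph{Injectivity.} This is the only step with content. The elements $v_k$ lie in distinct graded pieces $\R[L_k]$, so it suffices to show that $v_k\neq 0$ for $0\le k\le r$. By the expansion above, $v_k=\sum_{x\in L_k}\beta_x e_x$ has nonnegative coefficients. Moreover, $L_k$ contains some flat $x$, for instance the closure of $k$ atoms of a maximal independent set, and we must check that $\beta_x\ge 1$ for such an $x$. In a geometric lattice, every rank-$k$ element is a join of $k$ atoms forming an independent set; that is, it has an atom basis. Hence $\beta_x\ge k!\ge 1$. I do not expect any real obstacle here; the only care needed is to cite the geometric-lattice fact that every flat has an atom basis.

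Combining the three points, the homomorphism is a bijective graded algebra map onto $\mathcal R(L)$, which proves the proposition.
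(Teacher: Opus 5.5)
Your proposal is correct and follows the paper's proof essentially step for step: you identify $v_k=u^k$, use associativity of $B(L)$ to get $v_jv_k=v_{j+k}$ (with vanishing for $j+k>r$, since $B(L)$ is concentrated in degrees $0,\ldots,r$), and conclude bijectivity because the $v_k$ are nonzero and lie in distinct graded components. Your atom-basis argument that $\beta_x\ge k!$ for some $x\in L_k$ makes explicit the nonvanishing of $v_k$, which the paper asserts without comment; your aside that commuting $L_a$ would suffice does not by itself replace associativity, but your main argument does not depend on it.
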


\begin{proof}
Since $e_{\hat 0}$ is the identity of the graded M\"obius algebra and
$L_a$ is multiplication by $e_a$, the operator $U$
is the multiplication operator associated to $u$; consequently,
\[
v_k=U^ke_{\hat 0}=u^k.
\]
Associativity therefore gives
\[
v_iv_j=u^{i+j}=v_{i+j}
\]
whenever $i+j\le r$.  If $i+j>r$, then $u^{i+j}=0$, since the graded
M\"obius algebra is concentrated in degrees $0,\ldots,r$.

For each $0\le k\le r$, the vector $v_k$ is nonzero and belongs to the
homogeneous component $B(L)_k$.  Since distinct $v_k$ lie in distinct
graded components, they are linearly independent.  Hence
$\cc R(L)$ has basis $v_0,\ldots,v_r$, and the homomorphism
\[
\R[t]/(t^{r+1})\to\cc R(L),\qquad t\mapsto v_1=u,
\]
is bijective. \qedhere
\end{proof}

\medskip

For $x\in L$, consider the multiaffine polynomial
\[
B_x(z) :=\sum_{I\textup{ a basis of }x}z^I,
\]
where $z = (z_a)_{a\in A}$ and for $S\subseteq A$ $z^S := \prod_{a\in S} z_a$.

For $n=|A|$, let $\cc A_n:=\R[z_a:a\in A]/(z_a^2:a\in A)$ be the squarefree algebra of multiaffine polynomials in $(z_a)_{a\in A}$, as in \cite{SinclairFFC}. We define the linear map $\Phi_\beta: \R[L]\to \cc A_n$ by
\[
\Phi_\beta(e_x)=\frac{B_x(z)}{\beta_x}.
\]

\begin{prop}\label{prop:adjoint-polynomial}
For $y\in L$,
\[
L_a^\beta e_y=
\sum_{\substack{x\lessdot y\\x\vee a=y}}
\frac{\beta_x}{\beta_y}e_x.
\]
The map $\Phi_\beta$ is injective and intertwines $L_a^\beta$ with $\del_a$, that is $\Phi_\beta(L_a^\beta e_y) = \del_a\Phi_\beta(e_y)$.
\end{prop}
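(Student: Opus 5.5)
We prove the three assertions in order: the formula for $L_a^\beta$, the injectivity of $\Phi_\beta$, and the intertwining $\Phi_\beta\circ L_a^\beta=\del_a\circ\Phi_\beta$.

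\textbf{The adjoint formula.} Since $\ip{e_x}{e_y}_\beta=\delta_{x,y}/\beta_x$ is diagonal, the adjoint has matrix entries
\[
\ip{e_x}{L_a^\beta e_y}_\beta=\ip{L_ae_x}{e_y}_\beta .
\]
By definition of $\diamond$, $L_ae_x=e_{x\vee a}$ when $\rk(x\vee a)=\rk(x)+1$ (equivalently $a\notin x$), and $L_ae_x=0$ otherwise. Hence $\ip{L_ae_x}{e_y}_\beta=1/\beta_y$ exactly when $x\lessdot y$ and $x\vee a=y$, and it vanishes otherwise. Writing $L_a^\beta e_y=\sum_x c_xe_x$, the left-hand side equals $c_x/\beta_x$. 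Therefore $c_x=\beta_x/\beta_y$ for such $x$ and $c_x=0$ otherwise, which is the stated formula.

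\textbf{Injectivity.} For a flat $x$, every basis $I$ of $x$ satisfies $\cl(I)=x$. Thus the monomial supports of $B_x$ and $B_{x'}$ are disjoint when $x\ne x'$. Each $B_x$ is nonzero because every flat has a basis. Nonzero polynomials with pairwise disjoint monomial supports are linearly independent, so $\Phi_\beta$ is injective.

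\textbf{Intertwining.} Differentiating the multiaffine polynomial $B_y$ gives
\[
\del_aB_y=\sum_{I\ni a}z^{I\setminus a},
\]
the sum running over bases $I$ of $y$ containing $a$. The key step is the bijection
\[
\{\text{bases } I \text{ of } y \text{ with } a\in I\}\;\longleftrightarrow\;\{(x,J): x\lessdot y,\ a\notin x,\ J \text{ a basis of } x\},\qquad I\mapsto\bigl(\cl(I\setminus a),\,I\setminus a\bigr).
\]
Here $x=\cl(I\setminus a)$ is a hyperplane of $y$ not containing $a$ (since $I$ is independent), so $x\vee a=y$. Conversely, given such a pair $(x,J)$, the set $J\cup\{a\}$ is independent because $a\notin x=\cl(J)$, and it spans $y$ because $x\vee a=y$. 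For a hyperplane $x$ of $y$, the conditions $a\notin x$ and $x\vee a=y$ are equivalent, so the index set matches the one in the adjoint formula. Hence
\[
\del_aB_y=\sum_{\substack{x\lessdot y\\x\vee a=y}}B_x .
\]
Dividing by $\beta_y$ yields
\[
\del_a\Phi_\beta(e_y)=\sum_{\substack{x\lessdot y\\x\vee a=y}}\frac{\beta_x}{\beta_y}\,\Phi_\beta(e_x)=\Phi_\beta(L_a^\beta e_y).
\]

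The only substantive step is verifying this bijection; the rest is bookkeeping. Note that when $a\in y$ fails, both sides are zero: there is no basis of $y$ containing $a$, and no hyperplane $x$ of $y$ satisfies $x\vee a=y$.
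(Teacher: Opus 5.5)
Your proof is correct and follows the paper's argument: pairing $L_a e_x$ with $e_y$ for the adjoint formula, disjoint monomial supports of the $B_x$ for injectivity, and deleting $a$ from the bases of $y$ that contain it for the intertwining, with the bijection spelled out in more detail than the paper gives. One small point is that the equivalence $a\notin x \iff x\vee a=y$ for hyperplanes $x$ of $y$ holds only when $a\le y$; your closing remark on the case $a\notin y$ covers this, so the argument is complete.
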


\begin{proof}
The adjoint formula follows by pairing $L_ae_x$ with $e_y$: see \cite{SinclairHamiltonian} for details.  The supports of the $B_x$ are disjoint because an independent set has a unique closure, so $\Phi_\beta$ is injective.  Differentiating $B_y$ by $z_a$ deletes $a$ from each basis of $y$ containing it; the remaining basis has closure $x\lessdot y$ with $x\vee a=y$, giving the displayed intertwining relation. \qedhere
\end{proof}

Set
\[
U^\beta:=U^{*_{\beta}}=\sum_{a\in A}L_a^\beta,
\qquad
Ke_x=\rk(x)e_x,
\qquad
Se_x=s(x)e_x,
\]
where $s(x)=\ab{A}-\ab{\{a:a\le x\}}$. Define 
\[
S^{-1}e_x:=\begin{cases}
s(x)^{-1}e_x,&x<\hat 1,\\
0,&x=\hat 1.
\end{cases}
\]

\begin{prop}\label{prop:D}
The operator
\[
D_\beta=S^{-1}U^\beta K
\]
satisfies
\[
D_\beta v_0=0,\qquad D_\beta v_k=kv_{k-1}\quad(1\le k\le r).
\]
Under the isomorphism of Proposition~\ref{prop:radial-core}, $U$ restricts to multiplication by $t$ and $D_\beta$ restricts to $d/dt$ on the radial core.
\end{prop}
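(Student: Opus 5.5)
The plan is a direct computation. First compute $U^\beta v_k$ using the adjoint formula of Proposition~\ref{prop:adjoint-polynomial}, and then check that $S^{-1}$ exactly cancels the resulting multiplicity. Since $K v_k = k v_k$, we get
\[
D_\beta v_k = k\, S^{-1}U^\beta v_k,
\]
so everything reduces to showing $S^{-1}U^\beta v_k = v_{k-1}$ for $1\le k\le r$. The case $D_\beta v_0=0$ is immediate from $K v_0 = 0$, since $\rk(\hat 0)=0$.

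Next, for $y\in L_k$, Proposition~\ref{prop:adjoint-polynomial} gives
\[
U^\beta e_y=\sum_a \sum_{x\lessdot y,\,x\vee a=y}\frac{\beta_x}{\beta_y}e_x .
\]
Hence
\[
U^\beta v_k=\sum_{y\in L_k}\beta_y U^\beta e_y=\sum_{x\in L_{k-1}}\beta_x\, N(x)\, e_x,
\]
where $N(x)$ is the number of pairs $(a,y)$ with $a$ an atom, $y\in L_k$ covering $x$, and $x\vee a=y$. Each atom $a\not\le x$ determines the unique such $y$, namely $y = x\vee a$, which covers $x$ by semimodularity of the geometric lattice. Atoms $a\le x$ give $x\vee a=x\neq y$ and so contribute nothing. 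Therefore $N(x)=|A|-|\{a:a\le x\}|=s(x)$.

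Since $x\in L_{k-1}$ with $k\le r$, we have $x<\hat 1$. Thus $s(x)>0$, and $S^{-1}$ multiplies $e_x$ by $s(x)^{-1}$, giving $S^{-1}U^\beta v_k=\sum_{x\in L_{k-1}}\beta_x e_x=v_{k-1}$. This proves $D_\beta v_k = k v_{k-1}$.

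For the final sentence of the proposition, observe that $Uv_k=v_{k+1}$ by the definition $v_k=U^k e_{\hat 0}$, with $Uv_r=0$ since $B(L)$ vanishes above degree $r$. Under the isomorphism of Proposition~\ref{prop:radial-core}, this is multiplication by $t$ on $\R[t]/(t^{r+1})$. Likewise $D_\beta v_k = kv_{k-1}$ corresponds to $t^k\mapsto kt^{k-1}$, which is $d/dt$; in particular both operators preserve $\cc R(L)$.

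The only step requiring care is the count $N(x)=s(x)$. It uses two facts: the join of a flat with an atom not below it covers that flat, and the truncation of $S^{-1}$ at $\hat 1$ never interferes, because the targets $x$ have rank at most $r-1$. The rest is bookkeeping.
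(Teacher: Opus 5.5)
Your proposal is correct and follows the paper's proof: you show $U^\beta v_k = S v_{k-1}$ by noting that each atom $a\not\le x$ determines the unique cover $x\vee a$, which contributes $\beta_x$ to the $e_x$-coefficient, and then you cancel with $S^{-1}$ and use $Kv_0=0$. You also spell out the identification of $U$ and $D_\beta$ with multiplication by $t$ and $d/dt$ on the radial core, which the paper leaves implicit.
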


\begin{proof}
Fix $1\le k\le r$ and $x\in L_{k-1}$. For each atom $a\nleq x$, the flat $y=x\vee a$ covers $x$, and the $e_x$-coefficient of $L_a^\beta(\beta_y e_y)$ is $\beta_x$. Since there are $s(x)$ atoms not below $x$,
\[
U^\beta v_k=Sv_{k-1},
\]
and therefore $D_\beta v_k=kS^{-1}Sv_{k-1}=kv_{k-1}$. Also $D_\beta v_0=0$ because $Kv_0=0$.
\end{proof}

For a cover $x\lessdot y$, put
\[
m(x,y)=\#\{a\in A:a\nleq x,\ x\vee a=y\}.
\]
Thus $s(x)=\sum_{y\gtrdot x}m(x,y)$.  The following gives an intrinsic description of the extension away from the radial subspace.

\begin{prop}\label{prop:naturality}
Put $g_x=\beta_xe_x$.  For every $y\in L$ and atom $a\in A$,
\begin{equation}\label{eq:rescaled}
L_a^\beta g_y=
\sum_{\substack{x\lessdot y\\x\vee a=y}}g_x,
\qquad
D_\beta g_y
=\rk(y)\sum_{x\lessdot y}\frac{m(x,y)}{s(x)}g_x.
\end{equation}
\end{prop}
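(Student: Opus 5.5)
Both identities follow by substituting $e_y=g_y/\beta_y$ into formulas we already have, namely the adjoint formula of Proposition~\ref{prop:adjoint-polynomial} and the definition $D_\beta=S^{-1}U^\beta K$ from Proposition~\ref{prop:D}. The only combinatorial input is how the atoms $a\nleq x$ are distributed among the covers $y\gtrdot x$.

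\emph{First identity.} By linearity and Proposition~\ref{prop:adjoint-polynomial},
\[
L_a^\beta g_y=\beta_y L_a^\beta e_y=\beta_y\sum_{\substack{x\lessdot y\\x\vee a=y}}\frac{\beta_x}{\beta_y}e_x=\sum_{\substack{x\lessdot y\\x\vee a=y}}g_x.
\]
This is the first formula in \eqref{eq:rescaled}.

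\emph{Second identity.} First, $K$ multiplies $g_y$ by $\rk(y)$. Next, summing the first identity over all atoms $a$ gives
\[
U^\beta g_y=\sum_{x\lessdot y}\#\{a\in A: x\vee a=y\}\,g_x .
\]
For a cover $x\lessdot y$, the condition $x\vee a=y$ forces $a\nleq x$, since otherwise $x\vee a=x\ne y$. Hence this count is exactly $m(x,y)$.

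Then apply $S^{-1}$, which divides each $g_x$ by $s(x)$. Every $x$ that appears satisfies $x\lessdot y$, so $x<\hat 1$, and the convention $S^{-1}e_{\hat 1}=0$ is never invoked. Moreover $s(x)\ge m(x,y)\ge1$: there is an atom $a\le y$ with $a\nleq x$, because $L$ is atomistic. So $s(x)\neq 0$ and the division is well defined.

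If $y=\hat 0$, both sides vanish, since $\rk(\hat 0)=0$ and the sum over $x\lessdot\hat 0$ is empty. Combining the three steps gives
\[
D_\beta g_y=\rk(y)\sum_{x\lessdot y}\frac{m(x,y)}{s(x)}g_x .
\]

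There is no real obstacle here. The one point needing care is identifying the count of atoms with $x\vee a=y$ as $m(x,y)$, which rests on $a\nleq x$ being automatic for a cover. As a sanity check, summing over $y\gtrdot x$ and using $s(x)=\sum_{y\gtrdot x} m(x,y)$ recovers $D_\beta v_k=kv_{k-1}$ from Proposition~\ref{prop:D}.
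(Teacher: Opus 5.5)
Your proof is correct and follows the same route as the paper: you multiply the adjoint formula of Proposition~\ref{prop:adjoint-polynomial} by $\beta_y$, then sum over atoms, apply $S^{-1}K$, and identify the atom count with $m(x,y)$. Your extra checks ($s(x)\ge m(x,y)\ge 1$ for $x\lessdot y$, and the case $y=\hat 0$) are details the paper leaves implicit. The paper's proof also verifies the partition identity $s(x)=\sum_{y\gtrdot x}m(x,y)$, which you use only in your sanity check.
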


\begin{proof}
The first identity is the adjoint formula after multiplying by $\beta_y$.  Summing it over the atoms and applying $S^{-1}K$ gives the formula for $D_\beta$.  The atoms not below $x$ are partitioned by the cover $x\vee a$, proving
$\sum_{y\gtrdot x}m(x,y)=s(x)$. \qedhere
\end{proof}

Thus, for fixed $x<\hat 1$, the coefficients
\[
p(x,y)=\frac{m(x,y)}{s(x)},\qquad y\gtrdot x,
\]
form the transition probabilities of the canonical random extension given by uniformly choosing an atom not below $x$ and replacing $x$ by $x\vee a$.  That is, apart from the factor $\rk(y)$, $D_\beta$ is the transpose of this upward Markov kernel.

\begin{prop}[Naturality and diagonal normalization]\label{prop:naturality-2}
If $\varphi:L\to L'$ is an isomorphism of geometric lattices and
$T_\varphi e_x=e_{\varphi(x)}$, then
\[
T_\varphi D_{\beta,L}=D_{\beta,L'}T_\varphi,
\qquad
T_\varphi v_k=v_k'.
\]
Moreover, suppose $R$ is diagonal in the flat basis and
\[
\widetilde D=R U^\beta K,\qquad \widetilde Dv_k=kv_{k-1}\quad(1\le k\le r).
\]
Then $Re_x=s(x)^{-1}e_x$ for every $x<\hat1$. In particular, $H_\beta$ depends only on the abstract geometric lattice.
\end{prop}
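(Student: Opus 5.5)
For naturality, I would check that every ingredient in $D_\beta=S^{-1}U^\beta K$ is defined intrinsically from the lattice. A lattice isomorphism $\varphi$ preserves rank, covers, joins and atoms. Hence $\beta_{\varphi(x)}=\beta_x$ and $s(\varphi(x))=s(x)$, and $\varphi$ restricts to a bijection $A\to A'$. From the rescaled formula of Proposition~\ref{prop:naturality}, or equivalently the adjoint formula of Proposition~\ref{prop:adjoint-polynomial}, we get $T_\varphi L_a^\beta=L_{\varphi(a)}^\beta T_\varphi$. Summing over atoms gives $T_\varphi U^\beta=U'^\beta T_\varphi$. Since $K$ and $S^{-1}$ are diagonal with entries preserved by $\varphi$, they also intertwine. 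Therefore $T_\varphi D_{\beta,L}=D_{\beta,L'}T_\varphi$. The identity $T_\varphi v_k=v'_k$ follows from $v_k=\sum_{x\in L_k}\beta_xe_x$ together with $\beta_{\varphi(x)}=\beta_x$.

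For diagonal normalization, I would write $Re_x=\rho(x)e_x$ and use the computation in the proof of Proposition~\ref{prop:D}, namely $U^\beta v_k=Sv_{k-1}$. Then
\[
\widetilde D v_k=kRSv_{k-1}=k\sum_{x\in L_{k-1}}\rho(x)s(x)\beta_x e_x .
\]
Setting this equal to $kv_{k-1}=k\sum_{x\in L_{k-1}}\beta_xe_x$ and comparing coefficients in the flat basis gives $\rho(x)s(x)\beta_x=\beta_x$ for every $x\in L_{k-1}$. Here $\beta_x>0$ always holds, and $s(x)>0$ whenever $x<\hat 1$, because in a geometric lattice some atom lies outside any proper flat. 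As $k$ ranges over $1,\dots,r$, the flat $x$ ranges over all flats of rank $\le r-1$, which are exactly the flats $x<\hat1$. Hence $\rho(x)=s(x)^{-1}$ for all of them. The value at $\hat1$ is unconstrained, but it is irrelevant to the statement.

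For the final claim, $\mathcal C^\beta$ is built from $v_r$, $D_\beta$ and the $L_a^\beta$. An isomorphism $\varphi$ induces the invertible linear map $T_\varphi$, which sends $v_r$ to $v_r'$ and intertwines each generating operator, with atoms permuted via $\varphi$. By induction on $k$, $T_\varphi\mathcal C_k^\beta(L)=\mathcal C_k^\beta(L')$, so the dimensions agree. Thus $H_\beta$ depends only on the abstract lattice. The only step needing care is checking $s(x)\ne0$ for $x<\hat1$ and noting that the $\hat1$ entry is unconstrained; beyond that, the argument is direct bookkeeping.
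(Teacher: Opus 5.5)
Your proposal is correct and follows essentially the same route as the paper: naturality because $\varphi$ preserves atoms, ranks, covers, $\beta$ and $s$, so it intertwines the intrinsic formulas for $L_a^\beta$ and $D_\beta$; and the normalization by comparing flat-basis coefficients in $RU^\beta Kv_k=kRSv_{k-1}$ against $kv_{k-1}$. You additionally make explicit two points the paper leaves implicit: that $\beta_x>0$ and $s(x)>0$ for $x<\hat 1$, and the induction $T_\varphi\mathcal C_k^\beta(L)=\mathcal C_k^\beta(L')$ behind the claim that $H_\beta$ depends only on the abstract lattice.
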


\begin{proof}
A lattice isomorphism preserves atoms, rank, ordered-basis counts, covers, and the functions $m$ and $s$. Hence it intertwines every term in \eqref{eq:rescaled}, proving naturality. For the normalization statement, write $Re_x=c_xe_x$.  Proposition~\ref{prop:D} gives
\[
U^\beta Kv_k=kSv_{k-1}.
\]
The coefficient of $e_x$, $x\in L_{k-1}$, in $RU^\beta Kv_k$ is therefore
$k c_xs(x)\beta_x$.  Equality with $kv_{k-1}$ forces $c_x=s(x)^{-1}$.
\end{proof}

\begin{remark}\label{rem:canonical}
The word \emph{canonical} refers to this choice-free, isomorphism-natural construction and to the forced diagonal normalization of the canonical adjoint lowering operator $U^\beta K$.  We do not claim that $D_\beta$ is the unique lowering operator on $\R[L]$ extending $d/dt$ on the radial subspace, nor do we claim that it is a derivation of the diamond product.
\end{remark}

Now let $L=L(M)$ for a simple rank-$r$ matroid $M$ with ground set $E=A$.  Since $\Phi_\beta(v_r)=B_M$, the $D_\beta$-free cyclic module is the Macaulay inverse system of the basis-generating polynomial (cf. \cite{MaenoNumata}).  Write
\[
\cc C_k^0(M)=
\Span\{L_{a_k}^\beta\cdots L_{a_1}^\beta v_r:a_1,\ldots,a_k\in E\},
\qquad
h_k^0(M)=\dim\cc C_k^0(M).
\]
The corresponding apolar algebra is Artinian Gorenstein of socle degree $r$; equivalently, its derivative spaces have symmetric Hilbert function, so
$h_k^0=h_{r-k}^0$.

\begin{prop}\label{prop:bounds}
For $0\le k\le r$,
\[
h_k^0(M)\le h_k^\beta(M)\le W_{r-k}(M),
\qquad
h_0^\beta=h_r^\beta=1.
\]
\end{prop}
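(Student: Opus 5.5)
The plan is to establish the three claims in turn: the grading containment, the lower bound, and the two endpoint values.

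\emph{Grading.} Both $D_\beta$ and each $L_a^\beta$ lower rank by exactly one. For $L_a^\beta$ this is the adjoint formula of Proposition~\ref{prop:adjoint-polynomial}. For $D_\beta$ it is formula \eqref{eq:rescaled} of Proposition~\ref{prop:naturality}. Since $v_r=\beta_{\hat1}e_{\hat1}$ lies in $\R[L_r]$, induction on $k$ gives $\cc C_k^\beta(M)\subseteq\R[L_{r-k}]$. Hence $h_k^\beta\le\dim\R[L_{r-k}]=W_{r-k}(M)$.

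\emph{Lower bound.} I will show by induction that $\cc C_k^0(M)\subseteq\cc C_k^\beta(M)$. The base case is $\cc C_0^0=\R v_r=\cc C_0^\beta$. For the step, the recursion defining $\cc C^\beta_{k+1}$ contains $\sum_a L_a^\beta\cc C_k^\beta$, which contains $\sum_a L_a^\beta\cc C_k^0$. The latter is spanned by the words $L_{a_{k+1}}^\beta\cdots L_{a_1}^\beta v_r$, so it equals $\cc C_{k+1}^0$. Taking dimensions gives $h_k^0\le h_k^\beta$.

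\emph{Endpoints.} Clearly $h_0^\beta=\dim\R v_r=1$, because $v_r\neq0$. At the other end, $\cc C_r^\beta\subseteq\R[L_0]=\R e_{\hat0}$, so $h_r^\beta\le1$. It remains to show $\cc C_r^\beta\neq0$. The cleanest route is through the lower bound: $h_r^0=h_0^0=1$ by the Gorenstein symmetry recorded just before the statement, so $h_r^\beta\ge h_r^0=1$. Alternatively, one can argue directly: $D_\beta^r v_r=r!\,v_0\neq0$ by Proposition~\ref{prop:D}, and $D_\beta^r v_r\in\cc C_r^\beta$.

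I expect no real obstacle here; every step is an induction or a dimension count. The only point needing care is checking that $D_\beta$ is homogeneous of degree $-1$. On $\R[L_0]$ the factor $K$ kills $e_{\hat0}$, and $S^{-1}$ is applied only after $U^\beta$ has lowered the degree. Its convention at $\hat1$ is therefore irrelevant, since $\hat1$ never appears in the image of $U^\beta$.
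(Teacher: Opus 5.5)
Your proof is correct and follows essentially the same route as the paper: rank-lowering of every generator gives $\cc C_k^\beta\subseteq\R[L_{r-k}]$, the $D_\beta$-free words give $\cc C_k^0\subseteq\cc C_k^\beta$, and $D_\beta^r v_r=r!\,v_0\neq0$ together with $\R[L_0]=\R e_{\hat0}$ gives $h_r^\beta=1$. Your primary Gorenstein-symmetry argument for $h_r^\beta\ge1$ is also valid, but the direct route you list as an alternative is the one the paper uses, and it avoids relying on the symmetry claim that the paper asserts without proof.
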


\begin{proof}
The first inequality comes from the $D_\beta$-free words. Every generator lowers rank by one, hence $\cc C_k^\beta\subseteq\R[L_{r-k}]$, proving the second. Degree zero is $\R v_r$, while $D_\beta^rv_r=r!v_0\ne0$ and $\R[L_0]=\R e_{\hat0}$, proving the endpoint identities. \qedhere
\end{proof}

\section{Examples and Separation from Valuative Invariants}\label{sec:examples}

\begin{prop}[Uniform matroids]\label{prop:uniform}
For every simple uniform matroid $U_{r,n}$,
\[
\cc C^\beta(U_{r,n})=\cc C^0(U_{r,n}),
\qquad
H_{\beta,U_{r,n}}=H^0_{U_{r,n}}.
\]
\end{prop}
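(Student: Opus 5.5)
The plan is to show that on a uniform matroid $D_\beta$ is a scalar multiple of $U^\beta=\sum_a L_a^\beta$ on each graded piece $\R[L_k]$. Once that holds, adjoining $D_\beta$ to the generators contributes nothing new, and an induction on $k$ gives $\cc C_k^\beta=\cc C_k^0$.

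\emph{Step 1 (the lattice of flats of $U_{r,n}$).} For $k<r$, the rank-$k$ flats are exactly the $k$-subsets of $E$, and the only rank-$r$ flat is $\hat 1=E$. Simplicity means $r\ge 2$, so every atom is a singleton. Hence for any flat $x$ with $\rk(x)=j<r$ the atoms below $x$ are its $j$ elements, and
\[
s(x)=n-j .
\]
So $s$ depends only on rank, and $S^{-1}$ acts on $\R[L_j]$ as the scalar $(n-j)^{-1}$ for every $j<r$.

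\emph{Step 2 ($D_\beta$ on each graded piece).} Take $y\in L_k$ with $1\le k\le r$. Every $x\lessdot y$ has rank $k-1<r$, so $x<\hat1$ and the value of $S^{-1}$ on it is $(n-k+1)^{-1}$. The formula $D_\beta=S^{-1}U^\beta K$ from Proposition~\ref{prop:D}, or equivalently \eqref{eq:rescaled}, then gives
\[
D_\beta|_{\R[L_k]}=\frac{k}{n-k+1}\,U^\beta|_{\R[L_k]}=\frac{k}{n-k+1}\sum_{a\in E}L_a^\beta|_{\R[L_k]}.
\]

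\emph{Step 3 (induction).} The inclusion $\cc C_k^0\subseteq\cc C_k^\beta$ is the first inequality of Proposition~\ref{prop:bounds}. For the reverse, induct on $k$; the case $k=0$ is $\R v_r$ on both sides. By Proposition~\ref{prop:bounds}, $\cc C_k^\beta\subseteq\R[L_{r-k}]$. Assuming $\cc C_k^\beta=\cc C_k^0$, Step 2 gives
\[
D_\beta\cc C_k^\beta\subseteq\sum_a L_a^\beta\cc C_k^0=\cc C_{k+1}^0 ,
\]
and trivially $\sum_aL_a^\beta\cc C_k^\beta=\cc C_{k+1}^0$. Hence $\cc C_{k+1}^\beta=\cc C_{k+1}^0$. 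Equality of the modules in every degree gives equality of the Hilbert series.

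The only point needing care is Step 2 at the top of the lattice. The $S^{-1}$ convention kills $e_{\hat1}$, but $D_\beta$ only ever applies $S^{-1}$ to flats of rank at most $r-1$, which are proper. Their $s$-values are the uniform quantity $n-j$, because all $(r-1)$-subsets are flats. I expect no real obstacle: the argument rests on the rank-homogeneity of $s$, which fails for non-uniform matroids.
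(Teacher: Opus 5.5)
Your proposal is correct and is essentially the paper's argument: the paper also observes that after lowering from the rank-$j$ layer, $S^{-1}$ acts by the scalar $(n-j+1)^{-1}$, so $D_\beta=\frac{j}{n-j+1}U^\beta$ on $\R[L_j]$, and it concludes that every $D_\beta$ can be deleted from a word; your induction on degree spells out that last step explicitly. (One trivial slip: simplicity does not force $r\ge 2$, since $U_{1,1}$ is simple, but your argument only uses that atoms are singletons, which holds for every simple matroid.)
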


\begin{proof}
On the rank-$j$ layer of $U_{r,n}$, $S^{-1}$ acts after lowering by the scalar $(n-j+1)^{-1}$.  Hence
\[
D_\beta e_x=\frac{j}{n-j+1}U^\beta e_x
=\frac{j}{n-j+1}\sum_aL_a^\beta e_x
\]
for all $x\in L_j$.
Every occurrence of $D_\beta$ can therefore be removed from a word without changing the generated cyclic space.
\end{proof}

\begin{example}[The cycle matroid of $K_5$]\label{ex:K5}
Exact computation gives
\[
W=(1,10,25,15,1),\qquad
H^0=1+10q+20q^2+10q^3+q^4,
\]
\[
H_\beta=1+11q+25q^2+10q^3+q^4,
\qquad
\delta=(0,1,5,0,0).
\]
Thus the differential completion already saturates the whole rank-two flat layer in degree two.
\end{example}

We next prove Theorem~\ref{thm:G-separation}.  Bonin constructed rank-three matroids $M,N$ on eight elements having the same $\mathcal G$-invariant but different configurations \cite{BoninG}.  One convenient presentation is by cyclic flats; the rank function is recovered from cyclic-flat data by the formula of Bonin--de Mier \cite{BoninDeMier}.  Both have rank-one cyclic flats $\{0,1\}$ and $\{6,7\}$ and rank-two cyclic flats $\{0,1,2,3\}$ and $\{0,1,6,7\}$; the third rank-two cyclic flat is $\{4,5,6,7\}$ for $M$ and $\{0,1,4,5\}$ for $N$.  Let
\[
P=M^*,\qquad Q=N^*.
\]
Then $P,Q$ are simple rank-five matroids with $\mathcal G(P)=\mathcal G(Q)$ and common Whitney vector
\[
(1,8,28,47,31,1).
\]

\begin{prop}[Exact computer-assisted separation]\label{prop:Bonin}
For this pair,
\[
H_P^0=H_Q^0=1+8q+25q^2+25q^3+8q^4+q^5,
\]
while
\[
H_{\beta,P}=1+9q+30q^2+25q^3+8q^4+q^5,
\]
\[
H_{\beta,Q}=1+9q+28q^2+25q^3+8q^4+q^5.
\]
\end{prop}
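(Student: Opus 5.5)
The proposition is a finite computation, so the plan is to organize it as an exact, reproducible calculation over $\Q$, with structural checks that guard the answer. First I construct $M$ and $N$ explicitly on $E=\{0,\dots,7\}$ from the stated cyclic-flat data, using the Bonin--de Mier formula
\[
r(X)=\min_{Z\text{ cyclic}}\bigl(r(Z)+|X\setminus Z|\bigr).
\]
I then pass to the duals via $r^*(X)=|X|-r(E)+r(E\setminus X)$. For $P=M^*$ and $Q=N^*$ I enumerate all flats, ranked by $r^*$, and count the ordered bases $\beta_x=(\rk x)!\cdot\#\{\text{bases of }x\}$. As sanity checks I confirm that $P,Q$ are simple, that the Whitney vector is $(1,8,28,47,31,1)$ in both cases, and, for completeness, I recompute $\mathcal G(P)=\mathcal G(Q)$, or cite Bonin together with the fact that $\mathcal G$ is preserved under duality.

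Next I build the operators on $\Q[L]$ in the rescaled basis $g_x=\beta_xe_x$, using Proposition~\ref{prop:naturality}. There $L_a^\beta g_y=\sum_{x\lessdot y,\,x\vee a=y}g_x$ has $0/1$ entries, and $D_\beta g_y=\rk(y)\sum_{x\lessdot y}\frac{m(x,y)}{s(x)}g_x$. These are sparse rational matrices between consecutive rank layers. Starting from $v_r=g_{\hat 1}$, I compute $\cc C_k^\beta$ layer by layer. At each step I apply all nine generators, $D_\beta$ and the eight $L_a^\beta$, to a basis of $\cc C_k^\beta$, then row-reduce exactly to extract a basis of $\cc C_{k+1}^\beta$. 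I run the same procedure with $D_\beta$ omitted to obtain $\cc C_k^0$.

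To cross-check $H^0$ independently, I use Proposition~\ref{prop:adjoint-polynomial}. Under $\Phi_\beta$, the space $\cc C_k^0$ is the span of the order-$k$ partial derivatives of $B_P$. Its dimension can therefore be computed as the rank of the catalecticant matrix of $B_P$ between degrees $k$ and $r-k$. The symmetry $h_k^0=h_{5-k}^0$ gives a further check.

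Several pieces of the answer are forced by earlier results and will serve as consistency tests. Proposition~\ref{prop:bounds} gives $h_0^\beta=h_5^\beta=1$. It also gives $h_1^\beta\le 9$: the space $\cc C_1^\beta$ is spanned by $D_\beta v_r=5v_4$ and the eight vectors $L_a^\beta v_r$. The value $9$ therefore says that $v_4$ is not in the span of the atom derivatives, which I would verify directly. We also have $h_4^\beta\le W_1=8$ and $h_3^\beta\le W_2=28$, so the agreement $h_3^\beta=25=h_3^0$ and $h_4^\beta=8$ are meaningful checks. The only real content is in degree $2$, where the values $30$ versus $28$ must come out of the rank computation inside $\R[L_3]$, with $W_3=47$.

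The main obstacle is not conceptual but one of reliability. The flats of $P$ and $Q$ must be generated correctly from cyclic-flat data, with no off-by-one error in dualization, and the ranks must be exact. I would guard against the first by checking the Whitney numbers against the stated vector and by checking that the $\beta_x$ satisfy $\sum_{x\in L_k}\beta_x=$ the coefficient count of $u^k$. I would guard against the second by doing all linear algebra over $\Q$ (or modulo several large primes, agreeing with the rational result).

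If a human-readable explanation of the degree-$2$ discrepancy is wanted, I would try to locate it in the distinct third rank-two cyclic flat. In $P$ versus $Q$, this flat changes which rank-three flats receive nonuniform values of $m(x,y)/s(x)$ under $D_\beta$. That in turn changes the span of $D_\beta\cc C_1^\beta$ relative to $\sum_a L_a^\beta\cc C_1^\beta$ by two dimensions. This is a supplementary observation, however, and not needed for the proof.
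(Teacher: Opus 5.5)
Your plan is essentially the paper's proof: rebuild $M,N$ from the cyclic flats via the Bonin--de Mier formula, dualize, construct $L_a^\beta$ and $D_\beta$ exactly over $\Q$, and read off the degreewise dimensions of the cyclic spaces by exact row reduction; working in the rescaled basis $g_x$ and cross-checking $H^0$ by catalecticant ranks are harmless additions. One caution about your optional heuristic: the paper's remark does not place the two extra degree-two directions in $D_\beta\cc C_1^\beta$ or in nonuniform weights $m(x,y)/s(x)$, but in the pure incidence vectors $L_a^\beta D_\beta v_5=5f_a$ with $f_a(X)=\mathbf 1_{\{a\notin X\}}$; their span modulo the common $25$-dimensional classical span has dimension $5$ for $P$ and $3$ for $Q$, while $D_\beta L_a^\beta v_5$ and $D_\beta^2v_5$ contribute nothing new.
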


\begin{proof}
This is an exact computer-assisted calculation.  Reconstruct the rank functions from the cyclic flats using
\[
r_M(X)=\min_{Z\in\mathcal Z(M)}\bigl(r_M(Z)+|X\setminus Z|\bigr)
\]
and dualize.  The reference script supplied with the source then reconstructs the flat lattices, the ordered-basis weights, $L_a^\beta$, and $D_\beta$ over $\mathbb Q$, and computes the degreewise cyclic ranks by exact rational row reduction.  Independently enumerating all $8!$ rank sequences verifies $\mathcal G(P)=\mathcal G(Q)$.  No floating-point rank decision enters the calculation.  The resulting exact Hilbert vectors are the displayed ones.
\end{proof}

\begin{remark}[The degree-two mechanism behind Theorem~\ref{thm:G-separation}]
The separation in Theorem~\ref{thm:G-separation} already occurs in degree two
and has a concrete description.  Work in the rescaled flat basis
$g_X=\beta_Xe_X$.  For a rank-three flat $X$ and elements $a,b$, define
\[
c_{ab}(X):=
\mathbf 1_{\{\rk(X\cup\{a,b\})=5\}},
\qquad
f_a(X):=\mathbf 1_{\{a\notin X\}}.
\]
Under the identification of the rank-three flat layer with functions on
$L_3$, the classical degree-two vectors are the pair-incidence vectors
$c_{ab}$, while
\[
L_a^\beta D_\beta v_5=5f_a.
\]
For both matroids $P$ and $Q$ in Theorem~\ref{thm:G-separation}, direct
row reduction from their cyclic-flat presentations gives
\[
\dim\Span\{c_{ab}:a,b\in E\}=25.
\]
Modulo this common classical subspace, however, the element-incidence
vectors have different ranks:
\[
\dim\Span\{\overline f_a:a\in E\}
=
\begin{cases}
5,&P,\\
3,&Q.
\end{cases}
\]
The remaining degree-two words $D_\beta L_a^\beta v_5$ and
$D_\beta^2v_5$ lie in the spaces generated by the classical vectors and
these same quotient classes, so they contribute no further directions.
Consequently
\[
h_2^\beta(P)=25+5=30,
\qquad
h_2^\beta(Q)=25+3=28.
\]
Thus the distinction between $P$ and $Q$ is already visible in the
first noncommutative correction to the classical apolar module.  All of
the ranks above can be checked directly from the cyclic-flat
presentations; the ancillary exact implementation provides an
independent rational-arithmetic verification.
\end{remark}

Theorem~\ref{thm:G-separation} follows.  Since every valuative invariant factors through $\mathcal G$ \cite{DerksenFink}, we obtain:

\begin{cor}\label{cor:nonvaluative}
There is no valuative matroid invariant whose restriction to simple matroids is $H_\beta$.  In particular $H_\beta$ is not determined, on simple matroids, by the Tutte polynomial, catenary data, or the Derksen $\mathcal G$-invariant \cite{BoninKung}.
\end{cor}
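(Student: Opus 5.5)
The argument is by contradiction, using Proposition~\ref{prop:Bonin} and the universality of the $\mathcal G$-invariant. Suppose $\Psi$ is a valuative matroid invariant, with values in some abelian group, whose restriction to simple matroids agrees with $M\mapsto H_{\beta,M}$.

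First I invoke Derksen--Fink \cite{DerksenFink}. The $\mathcal G$-invariant is universal among valuative invariants, so there is a group homomorphism $\pi$ with $\Psi=\pi\circ\mathcal G$ on all matroids. In particular, any two matroids with equal $\mathcal G$-invariant have equal $\Psi$-values.

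Next I apply this to the pair $P=M^*$ and $Q=N^*$ built from Bonin's matroids. For this step I need three facts, all recorded before Proposition~\ref{prop:Bonin}:
\begin{itemize}
\item $\mathcal G(P)=\mathcal G(Q)$. This follows from $\mathcal G(M)=\mathcal G(N)$ because the $\mathcal G$-invariant is compatible with duality: the rank sequences of the dual are determined by reversing permutations. It is also checked directly by the enumeration of all $8!$ rank sequences.
\item $P$ and $Q$ are simple, so $\Psi$ agrees with $H_\beta$ on both.
\item $H_\beta$ depends only on the lattice of flats, by Proposition~\ref{prop:naturality-2}. It is therefore a genuine matroid invariant, and the restriction hypothesis makes sense.
\end{itemize}
These give
\[
H_{\beta,P}=\Psi(P)=\pi(\mathcal G(P))=\pi(\mathcal G(Q))=\Psi(Q)=H_{\beta,Q}.
\]
This contradicts Proposition~\ref{prop:Bonin}, where the coefficients of $q^2$ are $30$ and $28$.

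For the ``in particular'' clause, the Tutte polynomial and catenary data are themselves valuative specializations of $\mathcal G$ \cite{BoninKung}. Each is therefore constant on the pair $\{P,Q\}$, and none of them can determine $H_\beta$. The $\mathcal G$ case is exactly the displayed equality.

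The only step with real content is the equality $\mathcal G(P)=\mathcal G(Q)$, together with the exact computation behind Proposition~\ref{prop:Bonin}. Both are taken as established earlier in the paper, so the remaining deduction is formal.
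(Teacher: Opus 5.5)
Your proposal is correct and follows the paper's argument. The paper likewise deduces the corollary from Theorem~\ref{thm:G-separation}, using the pair $P=M^*$, $Q=N^*$ together with the Derksen--Fink factorization of every valuative invariant through $\mathcal G$, and it handles the Tutte polynomial and catenary data as functions of $\mathcal G$ via Bonin--Kung. Your extra remark that $\mathcal G(P)=\mathcal G(Q)$ follows from $\mathcal G(M)=\mathcal G(N)$ is also right, though the precise rule is slightly more than ``reversing permutations'': the dual's rank sequence along a permutation is the complemented reverse of the original rank sequence along the reversed permutation. This gives a theoretical justification alongside the paper's direct enumeration of all $8!$ rank sequences.
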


For the original rank-three pair, however,
\[
H_{\beta,M}=H_{\beta,N}=1+7q+6q^2+q^3,
\]
while their duals are separated.  Thus no transformation depending only on $H_{\beta,M}$ can recover $H_{\beta,M^*}$ in general.

\section{The Generalized Theta Family}\label{sec:theta}

Let $G_t$ be the graph with two vertices joined by four internally disjoint paths of lengths $(1,t,t,t)$, and put $M_t=M(G_t)$.  Then
\[
|E(M_t)|=3t+1,\qquad r(M_t)=3t-2.
\]
The dual $N_t=M_t^*$ has rank three and four parallel classes of sizes $(1,t,t,t)$; collapsing each parallel class to a single element gives $U_{3,4}$.  Complements of flats of $M_t$ are cyclic sets of $N_t$, and the nullity of such a cyclic set is exactly differential degree from the top of $M_t$.  This turns the low-degree flat-coordinate calculation into a finite occupancy problem.

We use the intrinsic formulas \eqref{eq:rescaled}.  Since $M_t$ is simple, for a cover $X\lessdot Y$ one has $m(X,Y)=|Y\setminus X|$.  The normal subgroup $\mathfrak S_t^3$ permutes labels within the three long arms, while an additional $\mathfrak S_3$ permutes the arms; together they form the evident semidirect-product symmetry $\mathfrak S_t^3\rtimes\mathfrak S_3$.  Let $V_i=S^{(t-1,1)}$ and $W_i=S^{(t-2,2)}$ on arm $i$.  We use the standard decomposition of subset-permutation modules into two-row Specht modules \cite{James}.

For the classical part it is useful to make a different dual incidence matrix explicit.  The girth of $M_t$ is $t+1$, so every $k$-subset is independent for $k\le3$.  Let $\mathcal I_k=\binom{E}{k}$ and let
\[
\mathcal S_k=\{S\subseteq E(N_t):\rk_{N_t}(S)=3,\ |S|=k+3\}
\]
be the spanning subsets of $N_t$ of nullity $k$.  If $I\in\mathcal I_k$ and $J$ is a residual monomial occurring in $\del_I B_{M_t}$, then $S=E\setminus J$ belongs to $\mathcal S_k$ and $S\setminus I$ is a basis of $N_t$.  Thus, in the residual-monomial basis indexed equivalently by $S\in\mathcal S_k$, the $k$th squarefree derivatives have coefficient matrix
\begin{equation}\label{eq:theta-classical-incidence}
A_k(S,I)=
\begin{cases}
1,&I\subseteq S\text{ and }S\setminus I\text{ is a basis of }N_t,\\
0,&\text{otherwise}.
\end{cases}
\end{equation}
This spanning-subset model is used only for the classical derivative rank.  The later quotient calculation returns to complements of flats, hence to cyclic subsets of $N_t$.

\begin{lem}[Classical low degrees]\label{lem:theta-core}
For $t\ge6$ and $0\le k\le3$,
\[
h_k^0(M_t)=\binom{3t+1}{k}.
\]
\end{lem}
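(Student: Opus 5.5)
The plan is to prove the two inequalities $h_k^0(M_t)\le\binom{3t+1}{k}$ and $h_k^0(M_t)\ge\binom{3t+1}{k}$ separately.

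\emph{Upper bound.} The space $\cc C_k^0(M_t)$ is spanned by the words $L_{a_k}^\beta\cdots L_{a_1}^\beta v_r$. Under $\Phi_\beta$ these become the derivatives $\del_{a_k}\cdots\del_{a_1}B_{M_t}$, by Proposition~\ref{prop:adjoint-polynomial}. Since $\del_a^2$ kills multiaffine polynomials and the $\del_a$ commute, only the squarefree derivatives $\del_I B_{M_t}$ with $I\in\mathcal I_k$ can be nonzero. This gives $h_k^0\le\binom{3t+1}{k}$.

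\emph{Lower bound: reformulation.} It suffices to show that the $\binom{3t+1}{k}$ derivatives $\del_IB_{M_t}$ are linearly independent, i.e.\ that the incidence matrix $A_k$ of \eqref{eq:theta-classical-incidence} has full column rank. The girth is $t+1>3$, so every $I\in\mathcal I_k$ with $k\le3$ is independent, and $\del_IB_{M_t}$ is nonzero. In $N_t$, a set $S\setminus I$ of size $3$ is a basis exactly when its three elements lie in three distinct parallel classes among $P_0,P_1,P_2,P_3$, where $|P_0|=1$ and $|P_i|=t$. So $A_k$ is a purely combinatorial occupancy matrix. A row is determined by the occupancy of $S$ in the four classes; $S$ is spanning when it meets at least three classes.

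\emph{Lower bound: symmetry reduction.} I will exploit the symmetry group $G=\mathfrak S_t^3\rtimes\mathfrak S_3$. The matrix $A_k$ intertwines the permutation modules $\R[\mathcal I_k]$ and $\R[\mathcal S_k]$. Full column rank is therefore equivalent to injectivity of $A_k$ on each $G$-isotypic component of $\R[\mathcal I_k]$.
\begin{itemize}
\item First decompose $\R[\mathcal I_k]$ by the occupancy type $(\epsilon,k_1,k_2,k_3)$ of $I$, where $\epsilon\in\{0,1\}$ records $P_0$ and the $k_i$ record the long arms.
\item Each arm's subset module then splits into the two-row Specht pieces $S^{(t)},V_i,W_i$, and for $k_i=3$ also $S^{(t-3,3)}$. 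This uses the standard decomposition \cite{James}, which requires $t\ge 6$ so that all these pieces are distinct and nonzero.
\item On each isotypic component, $A_k$ reduces to a small matrix indexed by occupancy types of $I$ and $S$ carrying the same Specht label. Its entries are explicit polynomials in $t$, counting the ways of completing $S\setminus I$ to a transversal.
\end{itemize}
Since $k\le3$, only finitely many such blocks occur, independent of $t$. I will check that each block has full column rank by computing a suitable maximal minor and verifying it is a nonzero polynomial without integer roots $t\ge6$.

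\emph{Main obstacle and fallback.} I expect the $k=3$ case to be the main obstacle. There $I$ can occupy an arm with three elements, which produces the component $S^{(t-3,3)}$ and several mixed types such as $(1,1,1,0)$, $(0,2,1,0)$ and $(0,1,1,1)$. It is not obvious a priori that every component survives, because removing $I$ must still leave a transversal. Before doing the block computations, I will first try a cheaper route: a triangularity argument. Order the sets $I$ by occupancy vector and assign to each $I$ a witness $S_I=I\cup B_I$, with $B_I$ a transversal chosen to avoid the arms most heavily used by $I$. The aim is to show that $A_k(S_I,J)=0$ whenever $J$ is later than $I$. If that ordering can be made to work, independence follows without any representation theory. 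If not, the isotypic block computation above is the reliable fallback.
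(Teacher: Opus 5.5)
\textbf{Gap: the rank verification is promised, not carried out, and the triangular shortcut cannot work.}

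Your upper bound and the reduction to full column rank of $A_k$ are fine. Checking injectivity one isotypic component at a time is also legitimate. But the proposal stops where the content of the lemma begins: ``I will check that each block has full column rank'' is the whole proof, and you give neither the minors nor a structural reason they are nonzero.

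The shortcut you plan to try first cannot work.
\begin{itemize}
\item Take the earliest column $I$ in any order. Its witness row $S_I$ would need exactly one nonzero entry, since every other column is later. But the nonzero entries of a row $S$ correspond bijectively to the bases of $N_t|S$. Because $N_t$ is loopless and $|S|=k+3>3$, there are at least two such bases.
\item Ordering whole occupancy types instead does not rescue it at $t=6$. The source type $(0,3,0,0)$ reaches the target types $(0,4,1,1)$, $(1,4,1,0)$, $(1,4,0,1)$ only through the inclusion $W_{3,4}(6)$, which kills the $S^{(3,3)}$ component. So it needs rows of type $(1,3,1,1)$. The source type $(1,2,0,0)$ can reach only $(1,3,1,1)$, and every row of that type is hit by both source types. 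Hence neither type can precede the other.
\end{itemize}

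\textbf{The missing structural idea (the paper's argument).}
\begin{itemize}
\item Rows with $\varepsilon=0$ see only $\varepsilon=0$ sources.
\item Among sources with the same $\varepsilon$, rows of type $(\varepsilon,a+1,b+1,c+1)$ see only the source type $(\varepsilon,a,b,c)$, because $S\setminus I$ must be a transversal of the three long classes.
\item Hence $A_k$, ordered by $\varepsilon$, is block lower triangular. Its diagonal blocks are $W_{a,a+1}(t)\otimes W_{b,b+1}(t)\otimes W_{c,c+1}(t)$, which are injective by Gottlieb's theorem once $t\ge 2\max(a,b,c)+1$.
\item This settles every case except the three permutations of $(0,3,0,0)$ at $t=6$. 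There the $S^{(3,3)}$ kernel is detected in rows $(1,3,1,1)$, where the map is the identity on the heavy-arm $3$-subset. The competing source $(1,2,0,0)$ enters only through $\R\binom{[6]}{2}$, which has no $S^{(3,3)}$ constituent.
\end{itemize}

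\textbf{On your isotypic route.} It would handle this boundary case neatly: the one-arm $S^{(3,3)}$ component has the single source $(0,3,0,0)$, with a nonzero entry in row $(1,3,1,1)$. Two cautions remain.
\begin{itemize}
\item The available rows differ between $t=6$ and $t\ge7$; for example, $\R\binom{[6]}{4}$ contains no $S^{(3,3)}$. So ``nonzero polynomial for $t\ge6$'' must be checked using rows that actually exist at $t=6$.
\item For every other component you still have to produce the minors, or fall back on the triangular--Gottlieb argument above.
\end{itemize}
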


\begin{proof}
It suffices to show that $A_k$ has full column rank.  Write the occupancy of $I$ as $(\varepsilon,a,b,c)$, where $\varepsilon$ records the singleton parallel class of $N_t$.  If $\varepsilon=0$, select the target occupancy $(0,a+1,b+1,c+1)$; if $\varepsilon=1$, the selected complementing basis uses all three long classes, giving $(1,a+1,b+1,c+1)$.  Ordered first by $\varepsilon$ and then by occupancy, these selected blocks are triangular.  Their diagonal blocks are tensor products of ordinary set-inclusion matrices $W_{j,j+1}(t)$ and identity matrices.  Gottlieb's characteristic-zero rank theorem \cite{Gottlieb} gives full column rank whenever $t\ge2j+1$.  Thus the argument is immediate for $t\ge7$, since $j\le3$.

At $t=6$ the only boundary blocks are the three permutations of source occupancy $(0,3,0,0)$.  On the heavy arm the map to target occupancy $(0,4,1,1)$ contains the inclusion matrix $W_{3,4}(6)$.  By the standard subset-permutation decomposition \cite{James},
\[
\mathbb R\binom{[6]}3
\cong S^{(6)}\oplus S^{(5,1)}\oplus S^{(4,2)}\oplus S^{(3,3)},
\]
and $\ker W_{3,4}(6)=S^{(3,3)}$.  The only competing $\varepsilon=1$ source has heavy-arm factor $W_{2,3}(6)$, whose image is
$S^{(6)}\oplus S^{(5,1)}\oplus S^{(4,2)}$.  Passing also to target occupancy $(1,3,1,1)$ gives the identity on the heavy-arm $3$-subset coordinate, and therefore detects the missing $S^{(3,3)}$ summand.  The combined boundary block is injective.  Hence $A_k$ has full column rank also at $t=6$.
\end{proof}

\begin{lem}[Degree one]\label{lem:theta-degree1}
For $t\ge6$, $D_\beta v_r\in\cc C_1^0(M_t)$, and hence
\[
h_1^\beta(M_t)=3t+1.
\]
\end{lem}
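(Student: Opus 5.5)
The plan is to compute $D_\beta v_r$ explicitly in the rescaled flat basis $g_x=\beta_xe_x$ using \eqref{eq:rescaled}, and then to exhibit it as an explicit linear combination of the degree-one classical vectors $L_a^\beta v_r$. Since $v_r=g_{\hat1}$, the formula gives
\[
L_a^\beta v_r=\sum_{\substack{x\lessdot\hat1\\ a\nleq x}}g_x,
\qquad
D_\beta v_r=r\sum_{x\lessdot\hat1}\frac{m(x,\hat1)}{s(x)}g_x .
\]
For a hyperplane $x$, every atom $a\nleq x$ satisfies $x\vee a=\hat1$. Hence $m(x,\hat1)=s(x)$, and therefore $D_\beta v_r=r\,v_{r-1}=r\sum_{x\in L_{r-1}}g_x$. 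This part is valid for any simple matroid. Identifying $\R[L_{r-1}]$ with functions on hyperplanes via the $g$-basis, $L_a^\beta v_r$ becomes the indicator function $x\mapsto\mathbf 1_{\{a\in E\setminus x\}}$. The task is therefore to write the constant function $1$ on hyperplanes as a combination $\sum_a w_a\mathbf 1_{\{a\in E\setminus x\}}$.

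Next I will describe the hyperplane complements of $M_t$. They are the cocircuits of $M_t$, that is, the circuits of $N_t=M_t^*$, which is consistent with the remark that complements of flats are cyclic sets of nullity equal to the differential degree. Since $N_t$ is $U_{3,4}$ with its points replaced by parallel classes of sizes $(1,t,t,t)$, its circuits are of two kinds: the $2$-subsets of one long parallel class, and the $4$-sets containing exactly one element from each of the four classes. I would double-check this against the graph: two edges on the same long path form a $2$-edge cut, and the other cocircuits consist of the single edge together with one edge from each long path.

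Now I solve the linear system $\sum_{a\in C}w_a=1$ over all circuits $C$ of $N_t$. The $2$-element circuits force $w_a=\tfrac12$ for every element $a$ of a long class. The $4$-element circuits then force $w_{e_0}+\tfrac32=1$, so $w_{e_0}=-\tfrac12$ for the singleton edge $e_0$. With these values every circuit condition holds. This yields the identity
\[
D_\beta v_r=\frac r2\Bigl(\sum_{a\ne e_0}L_a^\beta v_r-L_{e_0}^\beta v_r\Bigr)\in\cc C_1^0(M_t).
\]
Hence $\cc C_1^\beta(M_t)=\cc C_1^0(M_t)$, and Lemma~\ref{lem:theta-core} with $k=1$ gives $h_1^\beta(M_t)=3t+1$.

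The only step needing care is the complete enumeration of the cocircuits of $M_t$; once it is settled, the rest is bookkeeping. The hypothesis $t\ge6$ is used only through Lemma~\ref{lem:theta-core}. The membership $D_\beta v_r\in\cc C_1^0(M_t)$ itself holds for all $t\ge2$.
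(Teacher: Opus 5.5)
Your proof is correct and is essentially the paper's argument. In both, the hyperplane complements are the circuits of $N_t$, $D_\beta v_r=r\,v_{r-1}$ is the constant vector on them, and the weights $-\tfrac12$ on the singleton class and $\tfrac12$ on the long classes write it as a combination of the $L_a^\beta v_r$. You are slightly more explicit than the paper in checking $m(x,\hat1)=s(x)$ and in citing Lemma~\ref{lem:theta-core} for $h_1^0=3t+1$.
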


\begin{proof}
In the cyclic-set model, the rank-$(r-1)$ flats are complements of circuits of $N_t$.  Formula \eqref{eq:rescaled} shows that $D_\beta g_{\hat 1}$ is a constant multiple of the constant vector on these circuits.  Give the singleton class weight $-1/2$ and every element in a long class weight $1/2$.  Every two-element circuit inside a long class then has total weight $1$, while every four-element transversal circuit has total weight $-1/2+3/2=1$.  Thus the constant circuit vector is a linear combination of the classical element-incidence vectors $L_a^\beta g_{\hat 1}$.
\end{proof}

\begin{lem}[Degree two]\label{lem:theta-degree2}
For $t\ge6$,
\[
\cc C_2^\beta/\cc C_2^0\cong V_1\oplus V_2\oplus V_3.
\]
\end{lem}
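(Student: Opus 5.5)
By Lemma~\ref{lem:theta-degree1}, $D_\beta v_r\in\cc C_1^0$, so $\cc C_1^\beta=\cc C_1^0$. Hence $\cc C_2^\beta$ is spanned by $\cc C_2^0$ together with the vectors $D_\beta L_a^\beta g_{\hat1}$, $a\in E$. (The word $D_\beta^2 v_r$ is a combination of these, since $D_\beta v_r$ is a combination of the $L_a^\beta g_{\hat1}$.) The plan is to compute the classes of these vectors in the quotient $\R[L_{r-2}]/\cc C_2^0$ and identify their span as a $\mathfrak S_t^3\rtimes\mathfrak S_3$-module.

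\textbf{Step 1: explicit form of the new vectors.} Work in the cyclic-set model: rank-$(r-2)$ flats of $M_t$ are complements of cyclic sets of $N_t$ of nullity $2$. Concretely these are unions of two circuits: two pairs from long classes (same or different classes), a triple in one long class, a pair together with a transversal circuit, or a set meeting all four classes of type $(1,a,b,c)$ with $a+b+c=4$.

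For a rank-$(r-1)$ flat $Y=E\setminus C$ and a cover $X\lessdot Y$, formula \eqref{eq:rescaled} gives $m(X,Y)=|Y\setminus X|$. Also $s(X)=|E\setminus X|$, which is the size of the nullity-2 cyclic set. So $D_\beta L_a^\beta g_{\hat1}$ is supported on the cyclic sets $Z$ with $a\in Z$. Its coefficient on $Z$ is
\[
\frac{r-1}{|Z|}\sum_{C\subset Z,\ a\in C}|Z\setminus C|,
\]
where $C$ runs over circuits with $Z\setminus C$ giving a cover. This depends only on the occupancy type of $Z$ and on which class contains $a$.

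\textbf{Step 2: reduce modulo $\cc C_2^0$.} By Lemma~\ref{lem:theta-core}, the $L_b^\beta L_a^\beta g_{\hat1}$ span the pair-incidence vectors $c_{ab}(Z)=\mathbf 1_{\{a,b\}\subseteq Z}$ (suitably weighted), and these have full rank $\binom{3t+1}{2}$. Write $w_a$ for the new vector. Subtract the weighted combination $\sum_b \lambda_{\cdot}\, c_{ab}$ of pair-incidence vectors, with weights depending only on class types as in the weight trick of Lemma~\ref{lem:theta-degree1} (for instance $-1/2$ and $1/2$ weights on classes). The aim is to show that modulo $\cc C_2^0$:
\begin{itemize}
\item[(i)] $w_e$ for the singleton element $e$ lies in $\cc C_2^0$;
\item[(ii)] for $a$ in long arm $i$, the sum $\sum_{a\in\text{arm } i}w_a$ lies in $\cc C_2^0$.
\end{itemize}
Then the quotient is spanned by the classes $\bar w_a$ with $a$ in a long arm. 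Since $\mathfrak S_t$ on arm $i$ permutes these and their sum vanishes, the arm-$i$ span is a quotient of the permutation module with the trivial summand killed. It is therefore $0$ or $V_i$, and the three arms give at most $V_1\oplus V_2\oplus V_3$.

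\textbf{Step 3: nonvanishing.} Show that for each $i$ the class $\bar w_a-\bar w_{a'}$ ($a,a'$ in arm $i$) is nonzero and that the three $V_i$ are independent; they are non-isomorphic, being twisted by different factors of $\mathfrak S_t^3$. To do this, exhibit a linear functional vanishing on every pair-incidence vector $c_{bc}$ but not on $w_a-w_{a'}$. The natural candidate is supported on cyclic sets of triple-in-one-class and pair-plus-pair types within arm $i$. On those sets the coefficient of $w_a$ differs by type: the ratios $|Z\setminus C|/|Z|$ differ between a triple (every pair in it is a circuit) and two disjoint pairs. Pair-incidence vectors, by contrast, see only subset containment. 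Finding such a functional, and certifying that it annihilates all of $\cc C_2^0$, is the main obstacle.

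I would first try the functional $\phi(Z)=\sum$ over triples $T\ni a$ in arm $i$ minus a compensating multiple over pairs-of-pairs containing $a$, chosen so that the incidence counts of each $c_{bc}$ cancel. This is a Gottlieb/Specht-type computation in $S^{(t-1,1)}$, using that $t\ge6$ keeps the relevant inclusion maps injective.
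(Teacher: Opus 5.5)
Your outline has the same architecture as the paper's proof. You reduce to $w_a=D_\beta L_a^\beta g_{\hat1}$, show the trivial part is classical, use $\R E\cong\mathbf 1^{\oplus4}\oplus V_1\oplus V_2\oplus V_3$ to bound the quotient by $V_1\oplus V_2\oplus V_3$, and then prove survival. Your Step 1 coefficients are also right: they give $\tfrac23(r-1)$, $\tfrac12(r-1)$, and $\tfrac45(r-1)$ or $\tfrac25(r-1)$ on the three target types, which is exactly the paper's $D_\beta$ column.

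There is nonetheless a genuine gap, because both load-bearing steps are only stated as aims. For (i) and (ii) you need no explicit weights, and the ``weight trick'' is not an argument; the missing idea is an orbit count.
\begin{itemize}
\item The $\mathfrak S_t^3$-orbits of $2$-subsets of $E$ are nine: the occupancy types $(0,2,0,0)$, $(0,1,1,0)$, $(1,1,0,0)$, each up to arm permutation.
\item The nullity-two cyclic sets of $N_t$ (the complements of flats in $L_{r-2}$) also form nine orbits, of types $(0,3,0,0)$, $(0,2,2,0)$, $(1,2,1,1)$.
\item Lemma~\ref{lem:theta-core} says $\{a,b\}\mapsto L_a^\beta L_b^\beta g_{\hat1}$ is injective. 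On $\mathfrak S_t^3$-invariants it therefore injects a $9$-dimensional space into a $9$-dimensional space, so it is onto.
\end{itemize}
Hence every invariant degree-two vector is classical. This gives $w_e\in\cc C_2^0$ and $\sum_{a\in\text{arm }i}w_a\in\cc C_2^0$ immediately.

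Step 3 is the real content of the lemma, and you leave it open. The paper settles it by compressing to the standard zonal vector of $V_1$ on the first arm. There, the classical columns $(0,1,0,1)$, $(0,1,1,0)$, $(0,2,0,0)$, $(1,1,0,0)$ together with the new column $(0,1,0,0)$ have a $5\times5$ minor with determinant $\tfrac{12}{5}(t-1)\neq0$. The outer $\mathfrak S_3$ then carries this to $V_2$ and $V_3$.

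Your annihilating functional is just the dual of such a minor, so the approach could work, but you have set it up on the wrong data. In the flat model the classical vectors are $c_{ab}(Z)=\mathbf 1[a,b\in Z,\ Z\setminus\{a,b\}\text{ independent in }N_t]$, not $\mathbf 1_{\{a,b\}\subseteq Z}$. For example, $c_{pp'}$ vanishes on $Z=\{p,p'\}\cup\{q,q'\}$ when $p,p'$ and $q,q'$ lie in different long arms. A functional tuned to cancel containment counts therefore certifies nothing about $\cc C_2^0$.

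Your Step 1 list also contains an error. ``Two pairs from the same class'' is type $(0,4,0,0)$, which has nullity three, so it does not index a flat in $L_{r-2}$ and cannot be part of the support of your functional.
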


\begin{proof}
Because $D_\beta v_r$ is classical, modulo $\cc C_2^0$ the only new degree-two vectors come from $D_\beta(\cc C_1^0)$.  The trivial target sector is already saturated classically: there are nine source occupancy orbits and nine cyclic target occupancy orbits, and Lemma~\ref{lem:theta-core} makes the induced map on the trivial isotypic component injective, hence bijective.  The element permutation module is $\mathbf 1^{\oplus4}\oplus V_1\oplus V_2\oplus V_3$, so after the trivial sector is removed these are the only possible quotient types.

For $V_1$, compress to the standard alternating line on the first long arm.  Four classical occupancy columns
\[
(0,1,0,1),\ (0,1,1,0),\ (0,2,0,0),\ (1,1,0,0)
\]
together with the $D_\beta(\cc C_1^0)$ column of occupancy $(0,1,0,0)$ have a $5\times5$ minor with determinant
\[
\frac{12}{5}(t-1)\ne0.
\]
Thus one copy of $V_1$ survives modulo the classical image.  The outer $\mathfrak S_3$-symmetry gives $V_2,V_3$, and each occurs with multiplicity at most one in the one-label source, giving the matching upper bound.  The labeled minor is recorded in Appendix~\ref{app:theta-ledger}.
\end{proof}
Thus
\[
h_2^\beta=\binom{3t+1}{2}+3(t-1)=\frac{9t^2+9t-6}{2}.
\]
Degree three requires one further finite calculation.  The one-$D_\beta$ part is
\[
\cc C_3^0+D_\beta(\cc C_2^0)+\sum_aL_a^\beta D_\beta(\cc C_1^0).
\]
(The omitted placement $L_a^\beta L_b^\beta D_\beta v_r$ is classical by Lemma~\ref{lem:theta-degree1}.)  The trivial target sector is again already saturated classically: in degree three there are sixteen source occupancy orbits and sixteen cyclic target occupancy orbits, so Lemma~\ref{lem:theta-core} gives an isomorphism on the trivial isotypic component.

The possible nontrivial types can be read directly from the source permutation modules.  The term $D_\beta(\cc C_2^0)$ is built from two-subset modules; by Young's rule \cite{James} these contain only
\[
V_i,\qquad W_i,\qquad V_i\otimes V_j\quad(i\ne j)
\]
after the trivial constituents are removed.  The ordered two-label source may additionally contain $S^{(t-2,1,1)}$ on a single arm.  The target, however, is a direct sum of tensor products of subset-permutation modules on the three arms, hence contains only two-row Specht modules; the three-row constituent therefore maps to zero.  Thus the displayed three families are the only possible nontrivial quotient types.

For a fixed $V_i$, the target zonal multiplicity is twelve while the classical image has multiplicity nine, so at most three new copies can occur.  For $W_i$, the two possible one-$D_\beta$ source lines have the same image modulo the classical part, as shown explicitly in Appendix~\ref{app:theta-ledger}, so at most one new copy occurs.  For $V_i\otimes V_j$, there is one source line from $D_\beta(\cc C_2^0)$ and two ordered source lines from $L D L$, giving multiplicity at most three.  The compressed witness matrices in Appendix~\ref{app:theta-ledger} have determinants
\begin{equation}\label{eq:theta-dets}
-\frac{41472}{125}(t-1)(3t-4)^2,
\qquad
\frac8{15}(3t-4),
\qquad
\frac6{125}(t-1)^2(3t-4).
\end{equation}
They are nonzero for $t\ge6$, so all three upper bounds are attained.  Therefore
\begin{equation}\label{eq:Q3}
\cc C_3^\beta/\cc C_3^0
\cong
\bigoplus_{i=1}^3 3V_i
\oplus
\bigoplus_{i=1}^3 W_i
\oplus
\bigoplus_{i<j}3(V_i\otimes V_j).
\end{equation}

Words containing two or three $D_\beta$'s add nothing further.  Among words with two $D_\beta$'s, $D_\beta L_a^\beta D_\beta v_r$ lies in the one-$D_\beta$ sector because $D_\beta v_r\in\cc C_1^0$.  Next $D_\beta^2v_r$ is radial, hence lies in the trivial $\mathfrak S_t^3$-isotypic component of the degree-two target; that trivial component was shown in Lemma~\ref{lem:theta-degree2} to be saturated by $\cc C_2^0$.  Therefore $D_\beta^2v_r\in\cc C_2^0$, and consequently $L_a^\beta D_\beta^2v_r\in\cc C_3^0$.  The only remaining pattern is $D_\beta^2L_a^\beta v_r$.  As $a$ varies, these vectors form a quotient of the element permutation module, so only trivial constituents and the $V_i$ can occur.  The degree-three trivial sector is classical and the full available $V_i$ target multiplicity is already saturated in \eqref{eq:Q3}.  Finally $D_\beta^3v_r$ is radial, hence trivial-isotypic, and the degree-three trivial sector is classically saturated.  Thus no higher-$D_\beta$ word enlarges \eqref{eq:Q3}.

\begin{proof}[Proof of Theorem~\ref{thm:theta}]
The degree-zero coefficient is one, and Lemma~\ref{lem:theta-degree1} gives $h_1^\beta=3t+1$.  Lemma~\ref{lem:theta-degree2} and Lemma~\ref{lem:theta-core} give
\[
h_2^\beta=\binom{3t+1}{2}+3(t-1)=\frac{9t^2+9t-6}{2}.
\]
From \eqref{eq:Q3} and
\[
\dim V_i=t-1,\qquad \dim W_i=\frac{t(t-3)}2,
\]
we obtain
\[
\dim(\cc C_3^\beta/\cc C_3^0)
=9(t-1)+\frac{3t(t-3)}2+9(t-1)^2
=\frac{3t(7t-9)}2.
\]
Since $h_3^0=\binom{3t+1}{3}$,
\[
h_3^\beta=\frac{9t^3+21t^2-28t}{2}.
\]
Substitution gives the stated positive log-concavity gap.
\end{proof}

For Larson's $t=26$ example, Whitney log-concavity fails at rank $r-2$ \cite{Larson}; ranks $r-1,r-2,r-3$ correspond to differential degrees $1,2,3$.  Theorem~\ref{thm:theta} therefore proves the opposite strict inequality for $H_\beta$ at precisely the corresponding index.  The determinant computation in \eqref{eq:theta-dets} is recorded explicitly in Appendix~\ref{app:theta-ledger}.

\section{Computational Evidence}\label{sec:computation}

The exact reference implementation takes only a finite ground set and rank oracle.  It reconstructs the flat lattice, the $\beta_F$, the operators $L_a^\beta,D_\beta$, and the cyclic spaces by exact rational row reduction.  It independently checks
\[
h_k^0\le h_k^\beta\le W_{r-k},\qquad h_0^\beta=h_r^\beta=1,
\]
and records any failure of the conjectures.

\begin{prop}[Complete eight-element census]\label{prop:n8}
For every one of the $950$ pairwise nonisomorphic simple matroids on eight elements in the Mayhew--Royle catalogue \cite{MayhewRoyle}, $H_\beta$ is strictly log-concave and satisfies Conjecture~\ref{conj:front}.  Moreover $H_\beta\ne H^0$ in $894$ cases.
\end{prop}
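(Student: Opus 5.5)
The statement is a finite exhaustive verification, so the proof will be a computer-assisted census. I will make it rigorous by ensuring every decision is exact and by cross-checking the input data and the outputs.

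\textbf{Input.} I will load the Mayhew--Royle catalogue \cite{MayhewRoyle} of matroids on eight elements and filter to the simple ones, those with no loops and no parallel pairs. I will confirm that exactly $950$ pairwise nonisomorphic matroids remain. For each one I will build a rank oracle and reconstruct the lattice of flats $L(M)$ by closure of subsets. As a sanity check, I will confirm that the rank-one flats are exactly the eight singletons. I will also recompute the ordered-basis counts $\beta_x$, either by direct enumeration or recursively via $\beta_y=\sum_{x\lessdot y}\beta_x\,m(x,y)$.

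\textbf{Operators and ranks.} Working in the rescaled basis $g_x$, I will assemble the sparse integer-rational matrices of $L_a^\beta$ and $D_\beta$ layer by layer using \eqref{eq:rescaled}. I will then compute $\cc C_k^\beta$ iteratively from $\cc C_0^\beta=\R v_r$. At each step I apply all $|E|+1$ generators to a basis of $\cc C_k^\beta$ and extract a basis of the span by exact row reduction over $\Q$. Doing the same with $D_\beta$ omitted gives $h_k^0$.

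\textbf{Internal checks.} Three checks guard against implementation errors:
\begin{itemize}
\item the identity $D_\beta v_k=kv_{k-1}$ from Proposition~\ref{prop:D};
\item the bounds of Proposition~\ref{prop:bounds}, together with the symmetry $h_k^0=h_{r-k}^0$;
\item agreement with the known outputs for uniform matroids (Proposition~\ref{prop:uniform}) and with the examples in Section~\ref{sec:examples}.
\end{itemize}

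\textbf{Verifying the claims.} For each matroid I will test the strict inequality $(h_k^\beta)^2>h_{k-1}^\beta h_{k+1}^\beta$ for $1\le k\le r-1$. I will test $h_k^\beta\ge h_{r-k}^\beta$ for $k\le r/2$. Finally I will count the matroids with $H_\beta\ne H^0$ and confirm that the count is $894$. All of these are integer comparisons, so once the ranks are exact the verification is rigorous.

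\textbf{Main obstacle.} The bottleneck is not mathematical but computational: the middle layers can have a few hundred flats. Exact rational elimination is still cheap at that size, since the matrices are small and the entries are simple rationals.

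The real risk is correctness of the catalogue handling, in particular the isomorphism-class count and the simplicity filter. I will guard against this by recomputing Whitney vectors and comparing the aggregated statistics with published counts of simple matroids on eight elements by rank. As an independent check, I will also rerun the rank computations modulo several large primes and confirm they agree with the rational results.
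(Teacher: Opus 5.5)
Your proposal is correct and is essentially the paper's proof. The paper also runs an exhaustive computation over the $950$ catalogue matroids. It propagates $\cc C_k^\beta$ and $\cc C_k^0$ degree by degree by exact rational row reduction and asserts the radial identity and the bounds of Proposition~\ref{prop:bounds}, and it separately validates the result against the three-prime modular backend.

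The one difference is that you do the exact computation in the rescaled basis $g_x=\beta_xe_x$ of \eqref{eq:rescaled}, which the paper reserves for its modular backend. This is harmless, since a diagonal invertible change of basis preserves every rank.
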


\begin{proof}
The reference implementation computes all cyclic-space ranks over $\Q$ by exact row reduction. Running it over the complete simple eight-element census gives the stated counts; the ancillary output records the degreewise Hilbert vectors and all assertion checks.
\end{proof}

The rank distribution of the census is
\[
\begin{array}{c|rrrrrrr}
r&2&3&4&5&6&7&8\\\hline
\#&1&68&617&217&40&6&1.
\end{array}
\]
The smallest exact log-concavity gap is $35$.  Among the $1220$ nontrivial top-heavy comparisons, $1107$ are strict.  The largest defect observed is
\[
H^0=(1,8,24,34,24,8,1),\qquad
H_\beta=(1,8,28,46,28,8,1).
\]
Thus the evidence is not driven by cases where $D_\beta$ does nothing.

For larger data sets we use \eqref{eq:rescaled}, so that the cyclic ranks depend only on ranked flat-cover incidence and small rational coefficients.  The fast backend computes the ranks independently over
\[
\mathbb F_{65521},\qquad\mathbb F_{65519},\qquad\mathbb F_{65497}.
\]
All denominators in \eqref{eq:rescaled} are smaller than these primes, so reduction is well-defined.  For any fixed rational matrix, reduction modulo such a prime can only lower its rank; agreement at three primes is therefore evidence rather than an exact rational-rank certificate.  The independent validation is that the backend reproduces $(H^0,H_\beta)$ for all $950$ eight-element matroids with zero discrepancies against the exact implementation.

\begin{remark}[Nine-element modular sample]\label{rem:n9}
A reproducible stratified sample of $400$ simple nine-element matroids, with rank counts
\[
(95,95,95,95,20)\quad\text{in ranks }3,4,5,6,7,
\]
has three-prime agreement in every case. The resulting modular candidate Hilbert vectors are strictly log-concave and satisfy Conjecture~\ref{conj:front} in all $400$ cases; the modular computation gives $H_\beta\ne H^0$ in $389$ cases. Five sample members in ranks $3,4,5$ were also recomputed by the exact rational implementation, and all five agree.
\end{remark}

The sample used seed $20260817$ and the Mayhew--Royle nine-element catalogue \cite{MayhewRoyle}. Its smallest modular log-concavity gap is $42$. We emphasize that Remark~\ref{rem:n9} is computational evidence, not a rational-rank verification.  The exact implementation is the specification; the modular backend is a separately validated high-volume test.

\section{Questions}\label{sec:questions}

The numerical evidence is strong enough that the main problem is now structural.

\begin{question}
What mechanism forces Conjecture~\ref{conj:lc}?  Does the $D_\beta$-filtration have an associated graded object carrying a Lorentzian or Lefschetz structure, perhaps in the sense suggested by recent structural results on Lefschetz modules \cite{AminiHuhLarson}?
\end{question}

\begin{question}
Can front-loaded defect be realized by maps between complementary defect spaces, rather than only as a numerical inequality?
\end{question}

\begin{question}
How does the filtered cyclic module behave under direct sum, deletion, contraction, and duality?  Simple formulas for $H_\beta$ itself fail, and the Bonin pair shows that duality is not determined by $H_\beta$ alone.
\end{question}

\section*{Methods}

OpenAI's ChatGPT 5.5 and ChatGPT 5.6 Sol and Anthropic's Claude Opus 5 were used at exploratory stages to generate examples and computations and to assist in drafting the manuscript. In addition, ChatGPT 5.6 Sol generated the code included with the ancillary files and carried out the computations for the generalized theta family. The conceptual framework is the sole work of the author. The author has thoroughly edited the manuscript, has independently verified the code and computations, and takes sole responsibility for the accuracy of all results.

\appendix

\section{Theta determinant witnesses}\label{app:theta-ledger}

We record the finite minors behind Lemma~\ref{lem:theta-degree2} and \eqref{eq:theta-dets}.  For $S^{(t-j,j)}$ on a long arm, choose the standard zonal vector obtained by alternating on $j$ fixed disjoint pairs and symmetrizing over the remaining labels.  Tensor these vectors across the three long arms.  Equivariance reduces the incidence maps to multiplicity matrices indexed by occupancy types.

\begin{lem}[Affine occupancy entries]\label{lem:affine-occupancy}
Every entry in the compressed degree-two and degree-three witness matrices below is affine in $t$.
\end{lem}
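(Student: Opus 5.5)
We prove Lemma~\ref{lem:affine-occupancy} by tracking exactly where $t$ can enter an entry of a compressed witness matrix. A compressed entry is the coefficient obtained by applying an equivariant incidence operator to a tensor product of standard zonal vectors. We then read off that coefficient at a fixed representative target set of a given occupancy type. Three sources of $t$-dependence are possible:
\begin{enumerate}
\item the combinatorial counts of how many source subsets meet a fixed target set in a given way;
\item the scalar weights $m(X,Y)/s(X)$ and the factor $\rk(Y)$ appearing in \eqref{eq:rescaled};
\item the normalization of the zonal vectors.
\end{enumerate}
The plan is to handle each in turn and show that, once all fixed labels are accounted for, at most one free symmetrized label contributes per entry.

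\textbf{Step 1: the zonal vectors.} Fix the zonal vector for $S^{(t-j,j)}$ on an arm as the alternating product on $j$ fixed disjoint pairs, tensored with the uniform average over the remaining $t-2j$ labels. Since the remaining labels are averaged rather than summed, the vector has no $t$-dependent normalization. Evaluating it on a subset of the arm then depends only on how that subset meets the fixed pairs and on the number of free labels it contains.

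\textbf{Step 2: the classical operators $L_a^\beta$.} In the rescaled basis $g_x$, these act by $0/1$ incidence, by \eqref{eq:rescaled}. Consider a single lowering step applied to a zonal source and read at a fixed target representative. It sums over the atoms $a$ that bridge the source and target flats. In the cyclic-set model of $N_t$, the bridging atoms are either among the finitely many fixed labels or range over the free labels of one arm. In the latter case the sum over free labels, divided by the averaging denominator, gives a count of the form $t-c$ for a constant $c$ depending only on the occupancy type, which is affine. For degree-three words of the form $L\,D\,L$ the word has three letters. The task is to check that only one letter ever ranges over free labels. The others are forced: either the source vector is already localized, or the target representative pins down the label.

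\textbf{Step 3: the operator $D_\beta$.} This contributes the factors $\rk(Y)\,|Y\setminus X|/s(X)$, with $\rk(Y)=r-k$ and $r=3t-2$. The key check is that these factors are constant across a fixed witness matrix, or cancel between rows, so the denominator $s(X)$ introduces no rational dependence. For the degrees we need, $s(X)=|E\setminus X|$ equals the size of the cyclic complement. This size is fixed by the occupancy type, and in small nullity it is constant up to bounded shifts in the three-element transversal pattern.

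\textbf{Main obstacle.} The difficulty is Step 3 combined with the rank factor $\rk(Y)=3t-2-k$. Naively this is affine in $t$ and multiplies entries that are already affine, which would produce quadratic entries. I would handle it by extracting constant row or column scalings, such as $\rk(Y)$ or $1/s(X)$ depending only on the degree or the row occupancy, into a diagonal prefactor outside the matrix. This prefactor is what produces the displayed factors $(3t-4)$ and the powers of $5$ in \eqref{eq:theta-dets}. The remaining step is to verify entry-by-entry, via the occupancy bookkeeping, that what is left is affine.
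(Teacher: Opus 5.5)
There is a genuine gap, and it sits at your ``main obstacle''. You correctly notice that a $D_\beta$ letter contributes the factor $\rk(Y)=3t-2-j$ from \eqref{eq:rescaled}. But your remedy is to move $\rk(Y)$ and $1/s(X)$ into a diagonal prefactor outside the matrix, and that proves affinity for different matrices, not for the displayed witnesses the lemma is about. The displayed matrices visibly contain this factor: the $3(t-1)$ and $3t-4$ in the nonclassical columns are exactly $\rk(Y)=r-1$ (for $D_\beta(\mathcal C_1^0)$ and $LDL$) and $\rk(Y)=r-2$ (for $D_\beta(\mathcal C_2^0)$). For instance, the degree-two entry $\frac32(t-1)$ at target $(0,2,0,2)$ is $(3t-3)\cdot\frac24$, coming from one circuit $C\subset C'$ with $m=2$ and $s=|C'|=4$. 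Likewise $\frac{12}{5}(t-1)$ at $(1,2,1,1)$ is $(3t-3)(\frac35+\frac15)$. The factor also varies between columns and is absent from the classical ones, so it is not a single prefactor. Your final ``verify entry-by-entry'' defers precisely what the lemma asserts.

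The missing idea is locality. It shows the premise of your Step~2 is false, so the quadratic threat never arises. In the cyclic-set model, $L_a^\beta g_{E\setminus C}$ has a nonzero coefficient on $g_{E\setminus C'}$ only if $C\subset C'$ and $a\in C'\setminus C$, and $D_\beta$ likewise only passes to cyclic supersets. So when you read at a fixed target cyclic set $C'$ of nullity $k\le 3$, every label and every intermediate cyclic set of a contributing word lies inside $C'$. Moreover $|C'|\le k+3\le 6$, since $N_t$ has rank three. Hence no letter ranges over $t-c$ free labels. The number of contributing chains, each $m=|C'\setminus C|$, and each $s=|C'|$ depend only on the occupancy of $C'$ relative to the distinguished labels. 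Every witness column contains at most one $D_\beta$, so each entry is either a rational constant (the classical columns, which is why those entries are all small integers) or such a constant times $3t-3$ or $3t-4$. That is the whole proof, with nothing extracted.

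This does require the source zonal vectors to be symmetrized by summation, so that their coefficients are independent of $t$. Your Step~1 has this backwards. Averaging over the $t-2j$ remaining labels puts factors like $1/(t-2j)$ into a fixed-target readout, and a sum over $t-c$ labels divided by that denominator is $(t-c)/(t-2j)$, not $t-c$. The paper's one-line proof is phrased through the same free-label count as your Step~2. The entries themselves show that the $t$-dependence enters only through $\rk(Y)$, which your Step~3 rightly singles out.
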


\begin{proof}
After the distinguished pairs used in the zonal alternation are fixed, each degree-$\le3$ source/target incidence condition leaves at most one undistinguished label in a long arm free.  Its contribution is therefore either constant or a constant multiple of $t-c$ for a fixed integer $c$.  Tensoring with the fixed incidences in the other arms preserves affine dependence.  For example, the degree-two $V_1$ entry from source occupancy $(0,1,0,0)$ to target occupancy $(0,2,0,2)$ has one distinguished first-arm choice and $t-1$ undistinguished choices, giving the factor $\frac32(t-1)$ after the fixed normalization.
\end{proof}

For degree two in the $V_1$ sector, use the target rows
\[
(0,2,0,2),\ (0,2,2,0),\ (0,3,0,0),\ (1,1,1,2),\ (1,2,1,1)
\]
and the four classical source columns
\[
(0,1,0,1),\ (0,1,1,0),\ (0,2,0,0),\ (1,1,0,0),
\]
followed by the $D_\beta(\cc C_1^0)$ source $(0,1,0,0)$.  The compressed matrix is
\[
\begin{pmatrix}
2&0&0&0&\frac32(t-1)\\
0&2&0&0&\frac32(t-1)\\
0&0&2&0&2(t-1)\\
2&0&0&0&\frac65(t-1)\\
1&1&1&1&\frac{12}{5}(t-1)
\end{pmatrix},
\]
whose determinant is $12(t-1)/5$.

For degree three in the $V_1$ sector, the twelve target zonal rows, in order, are
\[
\begin{gathered}
(0,2,0,3),(0,2,2,2),(0,2,3,0),(0,3,0,2),(0,3,2,0),(0,4,0,0),\\
(1,1,1,3),(1,1,2,2),(1,1,3,1),(1,2,1,2),(1,2,2,1),(1,3,1,1).
\end{gathered}
\]
The first nine columns are the classical source occupancies
\[
\begin{gathered}
(0,1,0,2),(0,1,1,1),(0,1,2,0),(0,2,0,1),(0,2,1,0),\\
(0,3,0,0),(1,1,0,1),(1,1,1,0),(1,2,0,0),
\end{gathered}
\]
followed by the two $D_\beta(\cc C_2^0)$ columns $(0,1,0,1),(0,1,1,0)$ and the $L D L$ column with outer label in arm $2$ and inner label in arm $1$.  In these row and column orders the resulting matrix is
{\scriptsize
\[
\begin{pmatrix}
3&0&0&0&0&0&0&0&0&\frac{6(3t-4)}5&0&0\\
0&4&0&0&0&0&0&0&0&\frac{2(3t-4)}3&\frac{2(3t-4)}3&3(t-1)\\
0&0&3&0&0&0&0&0&0&0&\frac{6(3t-4)}5&\frac{9(t-1)}2\\
0&0&0&4&0&0&0&0&0&\frac{4(3t-4)}5&0&0\\
0&0&0&0&4&0&0&0&0&0&\frac{4(3t-4)}5&4(t-1)\\
0&0&0&0&0&3&0&0&0&0&0&0\\
3&0&0&0&0&0&0&0&0&3t-4&0&0\\
0&4&0&0&0&0&0&0&0&\frac{2(3t-4)}3&\frac{2(3t-4)}3&\frac{12(t-1)}5\\
0&0&3&0&0&0&0&0&0&0&3t-4&\frac{18(t-1)}5\\
1&2&0&2&0&0&2&0&0&\frac{4(3t-4)}3&\frac{3t-4}3&\frac{3(t-1)}2\\
0&2&1&0&2&0&0&2&0&\frac{3t-4}3&\frac{4(3t-4)}3&\frac{24(t-1)}5\\
0&0&0&2&2&1&0&0&2&\frac{3t-4}3&\frac{3t-4}3&2(t-1)
\end{pmatrix}.
\]
}
Its determinant is
\[
-\frac{41472}{125}(t-1)(3t-4)^2.
\]
Thus the $V_1$ target multiplicity is saturated; arm symmetry gives the same conclusion for $V_2,V_3$.

For $W_1$, take target rows
\[
(0,3,0,2),(0,3,2,0),(0,4,0,0),(1,2,1,2),(1,3,1,1)
\]
and classical columns
\[
(0,2,0,1),(0,2,1,0),(0,3,0,0),(1,2,0,0),
\]
followed by the $D_\beta(\cc C_2^0)$ source $(0,2,0,0)$.  The witness matrix is
\[
\begin{pmatrix}
2&0&0&0&\frac{6t-8}{5}\\
0&2&0&0&\frac{6t-8}{5}\\
0&0&2&0&\frac{3t-4}{2}\\
2&0&0&0&t-\frac43\\
1&1&1&1&2t-\frac83
\end{pmatrix},
\]
with determinant
\[
\frac8{15}(3t-4).
\]
If $C_1,\ldots,C_4$ denote the four classical columns, $D$ the final displayed column, and $R$ the additional same-arm $L D L$ zonal column, direct orbit counting gives
\[
R=-\frac{3(t-1)}{10}(C_1+C_2)-\frac{t-1}{4}C_3
-\frac{7(t-1)}{20}C_4+\frac{9(t-1)}{3t-4}D.
\]
Thus the two possible $W_1$ source lines have the same image modulo the classical part, and the $W_1$ quotient multiplicity is exactly one.
For $V_1\otimes V_2$, take target rows
\[
(0,2,2,2),(0,2,3,0),(0,3,2,0),(1,1,2,2),(1,1,3,1),(1,2,1,2),(1,2,2,1)
\]
and classical columns
\[
(0,1,1,1),(0,1,2,0),(0,2,1,0),(1,1,1,0),
\]
followed by three nonclassical columns: the $D_\beta(\cc C_2^0)$ source $(0,1,1,0)$ and the two ordered $L D L$ columns with outer/inner arms
\[
(1,2)\qquad\text{and}\qquad(2,1).
\]
The witness matrix is
{\scriptsize
\[
\begin{pmatrix}
2&0&0&0&t-\frac43&\frac32(t-1)&\frac32(t-1)\\
0&2&0&0&\frac{6t-8}{5}&2(t-1)&\frac32(t-1)\\
0&0&2&0&\frac{6t-8}{5}&\frac32(t-1)&2(t-1)\\
2&0&0&0&t-\frac43&\frac32(t-1)&\frac65(t-1)\\
0&2&0&0&t-\frac43&2(t-1)&\frac65(t-1)\\
2&0&0&0&t-\frac43&\frac65(t-1)&\frac32(t-1)\\
1&1&1&1&2t-\frac83&\frac{12}{5}(t-1)&\frac{12}{5}(t-1)
\end{pmatrix}.
\]
}
Its determinant is
\[
\frac6{125}(t-1)^2(3t-4).
\]
There is one $D_\beta(\cc C_2^0)$ line and two ordered $L D L$ lines in this sector, so the quotient multiplicity is at most three; the nonzero determinant attains this bound.

By Lemma~\ref{lem:affine-occupancy}, all displayed entries are affine in $t$ and can be checked by direct orbit counting.  The companion SymPy audit reconstructs the matrices from the rank-three parallel-class dual and independently verifies the symbolic formulas by exact arithmetic.

\section{Reference computation}\label{app:algorithm}

The source archive includes the exact reference script \texttt{hbeta\_oracle\_audit.py} and the theta determinant ledger \texttt{theta\_h3\_determinant\_ledger.py}.  For reproducibility, the exact implementation uses only a ground set and a matroid rank oracle.  It performs the following steps.
\begin{enumerate}
\item Enumerate flats using the closure test $e\in\cl(X)$ iff $r(X\cup\{e\})=r(X)$.
\item Compute $\beta_F$ for every flat by counting ordered atom bases (equivalently, $\beta_F=\rk(F)!$ times the number of bases of $M|F$).
\item Build the exact rational matrices $L_a^\beta$ from Proposition~\ref{prop:adjoint-polynomial} and form $D_\beta=S^{-1}(\sum_aL_a^\beta)K$.
\item Starting from $\cc C_0^\beta=\R v_r$, propagate recursively by
\[
\cc C_{k+1}^\beta
=
\Span\Bigl(
D_\beta\cc C_k^\beta\cup
\bigcup_{a\in A}L_a^\beta\cc C_k^\beta
\Bigr),
\qquad 0\le k<r,
\]
replacing each span by a column basis after every step.  This is exhaustive: every generator lowers lattice rank by exactly one, so $\cc C_k^\beta\subseteq\R[L_{r-k}]$ and the process stops after degree $r$.  Thus the program never enumerates all noncommutative words individually.  Omitting $D_\beta$ in the same recursion gives $H^0$.
\item Assert the radial identity and the bounds in Proposition~\ref{prop:bounds}, then record every failure of Conjectures~\ref{conj:lc} and \ref{conj:front}.
\end{enumerate}
For the modular backend we instead use the rescaled formulas \eqref{eq:rescaled}.  No $\beta_F$ is then required: the matrices depend only on the ranked flat-cover incidence data.  The three-prime computation is run independently in each field, and any disagreement is flagged for exact escalation.

\end{document}